\newtheorem{theorem}{Theorem}[section]
\newtheorem{lemma}[theorem]{Lemma}
\theoremstyle{definition}
\theoremstyle{remark}
\numberwithin{equation}{section}
\def\calM{{\mathcal M}}
\def\setmin{{\setminus\!\,}}
\def\C{{\mathbb C}}\def\F{{\mathbb F}}
\def\T{{\mathbb T}}
\def\Z{{\mathbb Z}}
\def\grL{{\mathfrak L}}
\def\grm{{\mathfrak m}}\def\grM{{\mathfrak M}}\def\grN{{\mathfrak N}}
\def\grS{{\mathfrak S}}
\def\grL{{\mathfrak L}}
\newcommand{\ord}{\text{ord}}
\newcommand{\tr}{\text{tr}}
\renewcommand{\d}{\,{\rm d}} 
\newcommand{\<}{\begin{equation}}
\renewcommand{\>}{\end{equation}}
\begin{document}
\title[Moments of Exponential Sums]{On the Moments of Exponential Sums over $r$-Free Polynomials}
\author[Ben Doyle]{Ben Doyle}
\address{Department of Mathematics, Purdue University, 150 N. University Street, West 
Lafayette, IN 47907-2067, USA}
\email{doyle133@purdue.edu}
\subjclass[2020]{11T55, 11L07, 11P55}
\keywords{Function fields, exponential sums, Hardy-Littlewood 
method.}
\thanks{}
\date{}

\begin{abstract} Let $\F_q[t]$ denote the ring of polynomials over the finite field $\F_q$. Building off of techniques of Balog and Ruzsa and of Keil in the integer setting, we determine the precise order of magnitude of $k$th moments of exponential sums over $r$-free polynomials in $\F_q[t]$ for all $k>0$. In the supercritical case $k>1+1/r$, we acquire an asymptotic formula using a function field analogue of the Hardy-Littlewood circle method.  \end{abstract}

\maketitle

\section{Introduction}

For an integer $r\geq 2$, let $a_r(f)$ denote the indicator function for the set of $r$-free polynomials in the ring $\F_q[t]$, where $q=p^h$ for some prime $p$ and positive integer $h$. Just as in the integer setting, one may gain insight into the additive structure of this set by examining the $k$th moments of the associated exponential sum. To define this exponential sum, we require some notation, a more expansive description of which we defer to the next section. We define $\F_q((1/t))$ to be the field of formal Laurent series in $1/t$ over $\F_q$, with elements having the shape $\alpha = \alpha_Mt^M+\cdots +\alpha_0+\alpha_{-1}t^{-1}+\cdots$ for some $M \in \Z$, where $\alpha_i \in \F_q$ for each $i$. We then define an additive character $e(\alpha) = \exp(2\pi i\tr(\alpha_{-1})/p)$ on $\F_q((1/t))$, where $\tr(\cdot):\F_q\rightarrow \F_p$ is the standard trace map given by
\<\label{trace}
\tr(c) = c^p+c^{p^2}+\cdots+c^{p^{h-1}}.
\>
Finally, we define the torus $\T$ to be the set of $\alpha \in \F_q((1/t))$ having $\ord(\alpha)<0$.

If $\d \alpha$ is a Haar measure on $\F_q((1/t))$ normalized so that $\int_\T 1 \d \alpha = 1$, the primary object of study is then the $k$th moment given by
\[I_r(k) = \int_\T \bigg|\sum_{\substack{f \in \calM \\ \deg(f) \leq N}} a_r(f)e(f\alpha)\bigg|^k \d \alpha,\]
where $\calM$ is the set of monic polynomials.

While this construction is perhaps less intuitive than its integer analogue, it makes possible many of the powerful analytic approaches used in additive problems. We prove the following result giving tight upper and lower bounds on $I_r(k)$ for all $k\geq 0$.
\begin{restatable}{theorem}{bounds}\label{bounds}
    Let $k \geq 0$, and let $r \geq 2$ be an integer. Then one has
    \[I_r(k) \asymp \begin{cases}
        q^{Nk/(r+1)} \quad &\text{if } k<1+1/r, \\
        Nq^{N/r} \quad &\text{if } k=1+1/r, \\
        q^{N(k-1)} \quad &\text{if } k>1+1/r,
    \end{cases}\]
    where implicit constants may depend on $k,q$ and $r$.
\end{restatable}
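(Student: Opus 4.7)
The plan is to combine M\"obius inversion with the function-field Hardy--Littlewood circle method, adapting the integer arguments of Balog--Ruzsa and Keil. Using the identity $a_r(f)=\sum_{d^r\mid f}\mu(d)$ one writes
\[
S(\alpha):=\sum_{\substack{f\in\calM\\ \deg f\le N}}a_r(f)e(f\alpha)=\sum_{d\in\calM}\mu(d)\,U_{N-r\deg d}(d^r\alpha),
\quad
U_L(\gamma):=\sum_{\substack{m\in\calM\\ \deg m\le L}}e(m\gamma).
\]
Explicit orthogonality over $\F_q$ yields a closed form for $U_L$ from which one reads off that $|U_L(\gamma)|$ attains its maximal size $\asymp q^L$ only on a ball of ultrametric radius $q^{-L-1}$ about the origin, decaying sharply outside.

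The simplest directions come for free. Since $S(0)\asymp q^N$ counts the $r$-free monic polynomials of degree $\le N$, and the ultrametric structure forces $S(\alpha)=S(0)$ on the ball $\{\ord\alpha<-N-1\}$ of Haar measure $q^{-N-1}$, one obtains the universal lower bound $I_r(k)\ge q^{-N-1}|S(0)|^k\asymp q^{N(k-1)}$ for every $k\ge 0$, matching the supercritical target. For $k\ge 2$, Parseval's identity $\int_\T|S|^2\,d\alpha=S(0)\asymp q^N$ combined with $\|S\|_\infty=S(0)$ gives by H\"older
\[
I_r(k)\le\|S\|_\infty^{k-2}\int_\T|S|^2\,d\alpha\lesssim q^{N(k-1)},
\]
which settles the supercritical upper bound in this range.

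For the remaining cases---subcritical and critical in both directions, and supercritical in the window $1+1/r<k<2$---I would carry out a function-field Farey dissection. Define the strata
\[
\grE_G:=\bigsqcup_{\substack{\deg g=G\\\gcd(a,g)=1}}\{\alpha\in\T:\ord(\alpha-a/g)<-N+(r-1)G\},\qquad G=0,1,\ldots,\lfloor N/(r+1)\rfloor,
\]
each of total Haar measure $|\grE_G|\asymp q^{(r+1)G-N}$ and essentially partitioning $\T$. Using the M\"obius decomposition, and noting that the terms $d$ with $g\mid d^r$ produce $U_{N-r\deg d}(d^r\alpha)$ evaluated near the origin, one should establish the two-sided pointwise asymptotic $|S(\alpha)|\asymp q^{N-rG}$ on the bulk of each $\grE_G$. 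The $k$th-moment contribution of each stratum is then
\[
|\grE_G|\cdot q^{(N-rG)k}\asymp q^{N(k-1)+G(r+1-rk)},
\]
a geometric progression in $G$ with common ratio $q^{r+1-rk}$ that crosses $1$ exactly at $k=1+1/r$. For $k>1+1/r$ the $G=0$ term dominates and returns $q^{N(k-1)}$; for $k<1+1/r$ the $G\asymp N/(r+1)$ term dominates and returns $q^{Nk/(r+1)}$; and at $k=1+1/r$ all $\asymp N$ terms are equal, producing the extra factor of $N$. The matching lower bounds in the subcritical and critical regimes come from restricting the integral to the dominant stratum.

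The central technical obstacle is the precise two-sided control of $|S(\alpha)|$ on each stratum. The upper bound hinges on an effective estimate for the complete exponential sum $R(a,g):=\sum_{s\bmod g^r}a_r(s)\,e(sa/g)$ of Ramanujan-sum type, with a bound $|R(a,g)|\lesssim|g|^{r-1}$ together with suitable multiplicativity in $g$; the lower bound needs a second-moment calculation in $a\bmod g$ showing this size is genuinely attained for a positive proportion of numerators. A secondary obstacle lies in the sub-$L^2$ range $1+1/r<k<2$, where the H\"older bound $\|S\|_\infty^{k-2}\|S\|_2^2$ is too weak: here one needs a nontrivial $\ell^\infty$ bound on $|S(\alpha)|$ off the major arcs, which I would obtain by Weyl differencing applied to the $r$th-power variable $d^r$ in the M\"obius decomposition.
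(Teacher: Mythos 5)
Your opening moves are sound and in two places simpler than the paper: the identity $S(\alpha)=S(0)$ on $\{\ord\alpha<-N-1\}$ gives the lower bound $q^{N(k-1)}$ for all $k$, and $\|S\|_\infty^{k-2}\|S\|_2^2$ settles the supercritical upper bound for $k\ge 2$ (the paper instead extracts this case from a full asymptotic formula). The heuristic geometric series in $G$ also correctly predicts all three regimes. But everything else in the theorem is made to rest on the two-sided estimate $|S(\alpha)|\asymp q^{N-rG}$ on ``the bulk'' of $\grE_G$, and that estimate is false as stated. The size of $S$ near $a/g$ is governed by the $r$-th-power structure of $g$, not by $\deg g$ alone: near $\ell/f^r$ with $(\ell,f^r)$ coprime one has $|S|\asymp q^{N}|f|^{-r}=q^{N-\deg g}$ with $\deg g=r\deg f$, which exceeds your claimed $q^{N-r\deg g}$ by $q^{(r^2-r)\deg f}$; conversely, on most of an arc of radius $q^{-N+(r-1)G}$ the linear factor $v(\beta)$ is far below $q^N$, so $|S|$ is far below $q^{N-rG}$ there. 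The correct level-set structure (which the paper's major arcs $\grM_N(f^r,\ell)$ encode) is $\asymp q^{(r+1)j}$ cores of measure $q^{-N}$ on which $|S|\asymp q^{N-rj}$, indexed by $j=\deg f$; your strata reproduce the right total measure by coincidence but would not survive an attempt at proof, and your proposed complete sums $R(a,g)$ with a uniform bound $|g|^{r-1}$ cannot capture this dichotomy.

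Two further gaps remain even after repairing the stratification. First, the subcritical upper bound cannot come from a level-set computation: your strata leave uncovered a set of measure $\asymp 1$, on which one would need the pointwise bound $|S|\ll q^{N/(r+1)}$; no such bound is available (and Weyl differencing is the wrong tool here --- the inner sums $\sum_h e(hd^r\alpha)$ evaluate exactly by \eqref{sum-ident-Mn}, the real issue is counting $d$ with $\|d^r\alpha\|$ small, and differencing in $d^r$ is in any case characteristic-sensitive in $\F_q[t]$ when $p\le r$). The paper's substitute is the decomposition $g(N;\alpha)=\sum_i T_i(\alpha)$ with $T_i$ small in $L^1$ for small $i$ and small in $L^2$ for large $i$ (Lemmas \ref{c-ident} and \ref{T_i}), interpolated by H\"older; this is the key idea your proposal lacks, and it also drives the critical upper bound $Nq^{N/r}$. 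Second, the subcritical and critical lower bounds require the major-arc approximation at denominators $f^r$ with $\deg f$ up to $N/(r+1)$, where the error term $q^{N/r}|f|^{r-1}$ in Lemma \ref{major-arcs} swamps the main term $q^N|f|^{-r}$; ``restricting to the dominant stratum'' therefore does not close. The paper instead smooths with the Fej\'er kernel, proves quasi-orthogonality of the $H_d$, and restricts to the sets $\grN(d)$ where the target arc's contribution provably dominates --- a second-moment argument in the spirit of the one you sketch for the numerators $a$, but carried out across denominators $d$, which is where the actual difficulty lives.
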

The critical point $k=1+1/r$ is precisely where the orders of magnitude in the super- and sub-critical cases align. It is not hard to see that these magnitudes reflect the contribution of major and minor arcs, respectively. Thus, in the super-critical case, we are able to apply the Hardy-Littlewood circle method to refine this result to an asymptotic formula. We define the zeta function over $\F_q[t]$ to be
\[
\zeta_q(s) = \sum_{f \in \calM} |f|^{-s} = \prod_{\pi \in \calM} \bigg(1-\dfrac{1}{|\pi|^s}\bigg)^{-1} = \dfrac{1}{1-q^{1-s}}.
\]
Here and throughout, we use $\pi$ to denote an irreducible polynomial in $\F_q[t]$.
\begin{restatable}{theorem}{asympt}
    \label{asymp-formula}
    For $k>1+1/r$ and for any $\epsilon>0$, one has the asymptotic formula
    \[\int_\T \bigg|\sum_{\substack{f \in \calM \\ |f| \leq q^N}}a_r(f)e(f\alpha)\bigg|^k \d\alpha = Cq^{N(k-1)} + O\big(q^{N(k-1-\theta)}\big),\]
    where $C$ and $\theta$ are constants depending only on $k,r,$ and $q$.
\end{restatable}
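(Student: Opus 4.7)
My plan is to apply a function field version of the Hardy-Littlewood circle method. I fix a parameter $Q$ with $1 \le Q \le N/2$ and set up a Farey-type dissection: the major arcs are
\[
\grM = \bigcup_{\substack{g \in \calM \\ \deg g \le Q}}\bigcup_{\substack{\deg a < \deg g \\ (a,g) = 1}}\big\{\alpha \in \T : |\alpha - a/g| < q^{-N-Q}\big\},
\]
which are pairwise disjoint by the ultrametric on $\F_q((1/t))$, and the minor arcs are $\grm = \T \setminus \grM$.

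On the minor arcs I would establish a pointwise Weyl-type bound $\sup_{\alpha \in \grm}|S(\alpha)| \ll q^{N(1-\delta)}$ for some $\delta > 0$ depending on $Q$, which should be implicit in the proof of Theorem~\ref{bounds} and follows from expanding $a_r(f) = \sum_{d^r \mid f}\mu(d)$ and bounding the resulting geometric sums. Fixing some $k_0 \in (1+1/r, k)$, Theorem~\ref{bounds} supplies $\int_{\T}|S(\alpha)|^{k_0}\d\alpha \ll q^{N(k_0-1)}$, so
\[
\int_{\grm}|S(\alpha)|^k\d\alpha \le \Big(\sup_{\alpha \in \grm}|S(\alpha)|\Big)^{k-k_0}\int_{\T}|S(\alpha)|^{k_0}\d\alpha \ll q^{N(k-1)-\delta(k-k_0)N},
\]
which gives the required error with $\theta \le \delta(k-k_0)$.

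On a major arc around $a/g$ I would approximate $S(a/g + \beta)$ explicitly. Using $a_r(f) = \sum_{d^r \mid f}\mu(d)$ and sorting $f \in \calM$ by residue class modulo $g$ separates the dependence on $a/g$ from that on $\beta$, producing
\[
S(a/g + \beta) = |g|^{-1}A_r(a,g)V(\beta) + E(a, g, \beta),
\]
where $A_r(a,g)$ is a Ramanujan-type sum twisting $a_r$ by $e(na/g)$ and $V(\beta) = \sum_{f \in \calM,\, \deg f \le N} e(f\beta)$. Raising to the $k$-th power and integrating over $\grM$ yields
\[
\int_{\grM}|S(\alpha)|^k\d\alpha = \grS(Q)\grJ(Q) + (\text{error}),
\]
with truncated singular series $\grS(Q) = \sum_{\deg g \le Q}|g|^{-k}\sum_{(a,g)=1}|A_r(a,g)|^k$ and truncated singular integral $\grJ(Q) = \int_{|\beta| < q^{-N-Q}}|V(\beta)|^k\d\beta$. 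Standard multiplicative analysis and Weil-type bounds on $A_r$ should give $\grS(Q) \to \grS < \infty$ exactly when $k > 1 + 1/r$, while $\grJ(Q)$ extends to a limiting integral $\grJ$ with $\grJ \asymp q^{N(k-1)}$; the normalised product then furnishes the constant $C$.

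The principal obstacle is that $k$ is an arbitrary positive real, so orthogonality cannot directly promote the approximation $S = M + E$ to an asymptotic for $|S|^k$. I would invoke the elementary pointwise inequality $\big||S|^k - |M|^k\big| \ll_k |E|(|M|+|S|)^{k-1}$, apply H\"older, and use Theorem~\ref{bounds} to dominate the $(k-1)$-th moment of $|S|+|M|$. This strategy succeeds only if $E(a,g,\beta)$ is smaller than $M(a,g,\beta)$ by a factor $q^{-cN}$ uniformly across $\grM$, a requirement that constrains the size of $Q$ and ultimately dictates the admissible value of $\theta$.
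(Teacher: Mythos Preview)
Your overall architecture---circle method, pointwise minor arc estimate interpolated against a moment bound, major arc approximation leading to a singular series and integral---matches the paper, but two specific choices create genuine gaps.

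\textbf{The dissection.} The paper does \emph{not} use a standard Farey dissection: it takes major arcs only around rationals $\ell/f^r$ with $f$ squarefree and $(\ell,f^r)$ $r$-free, of radius $q^{Q-N}|f|^{-r}$. This is essential. With your arcs of radius $q^{-N-Q}$ around all reduced $a/g$ with $\deg g\le Q$, the point $\alpha = 1/t^r + \beta$ with $|\beta| = q^{-N}$ lies on your \emph{minor} arcs (it is too far from $1/t^r$, and from every other small-height rational by the ultrametric inequality), yet $G(\alpha)\asymp q^N$ there: since $\|t^r\alpha\| = q^{r-N}$, the $d=t$ term in $G(\alpha)=\sum_d \mu(d)\sum_h e(hd^r\alpha)$ is already of size $q^{N-r}$. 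So your claimed bound $\sup_{\grm}|G|\ll q^{N(1-\delta)}$ fails. The spikes of $G$ sit precisely at $r$-th-power denominators, and the arcs must be wide enough (radius $\asymp q^{Q-N}$, not $q^{-Q-N}$) to absorb them.

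\textbf{The major arc approximant.} You propose $S(a/g+\beta)\approx |g|^{-1}A_r(a,g)V(\beta)$ with $A_r(a,g)$ a ``Ramanujan-type sum twisting $a_r$ by $e(na/g)$''. But $a_r$ is not periodic modulo $g$, so no such finite character sum exists; sorting $f$ by residue class mod $g$ does not separate $a_r(f)$ from the class. The paper instead first opens $a_r(f)=\sum_{d^r\mid f}\mu(d)$ and \emph{then} sorts the resulting linear inner sum modulo $f^r$, obtaining the explicit $S(f^r,\ell)=\mu(f)|f|^r/(\zeta_q(r)\Phi_r(f))$ of Lemma~\ref{S-ident}. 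This works cleanly only because the denominator is an $r$-th power---again forcing the adapted dissection.

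A smaller point: invoking Theorem~\ref{bounds} at $k_0\in(1+1/r,k)$ is circular, since the supercritical case of that theorem is deduced from the present one. The paper uses $k_0=1+1/r$ (the critical upper bound of Lemma~\ref{crit-upper}, proved independently) for exactly this reason.
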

As in other typical applications of the circle method, one may interpret the constant $C$ as a product of local densities at the $\pi$-adic and infinite places. The term $\theta$ will be given explicitly later on, but for now we remark that $\theta$ can be taken to be $1-1/r$ whenever $k>3+1/r$.

In the integer setting, precise information about moments of the exponential sum
\[M(f;\alpha) = \sum_{1 \leq n \leq X} f(n)e^{2\pi i n\alpha}\]
for large $X$ is often highly desirable for Diophantine problems, as it can generally be leveraged for greater control over error terms. Historically, much of the effort has been concentrated on the exponential sums $M(\Lambda;\alpha)$ (see Vaughan \cite{vaughan:1988}), $M(\mu;\alpha)$, and $M(\lambda;\alpha)$ (see Balog and Ruzsa \cites{balog-ruzsa:1998,balog-ruzsa:2001}, or more recently Pandey and Radziwi\l\l \;\cite{pandey-radziwill:2023}), where $\Lambda$, $\mu$, and $\lambda$ are the von Mangoldt, M\"obius, and Liouville functions, respectively. Work of Vaughan and Wooley \cite{vaughan-wooley:1998} also examines the impact of conjectured bounds on moments of the exponential sum for powers, which is relevant for Waring's problem, and more recently work of Pandey and Goldston \cite{goldston-pandey:2019} and Pandey \cites{pandey:2018,pandey:2022} investigates moments of $M(\tau_m;\alpha)$, where $\tau_m$ is the $m$-fold divisor function, for $m\geq 1$.

For an integer $n$, define $\mu_r(n)$ to be equal to 1 when $n$ is $r$-free, and $0$ otherwise. The integer analogue of our object of study would then be the $k$th moment
\[H_r(k) = \int_0^1 |M(\mu_r;\alpha)|^k \d\alpha.\]
Br\"udern, Granville, Perelli, Vaughan, and Wooley \cite{brudern-etal:1998} were the first to investigate this quantity, and for $k=1$ they obtained the bounds
\[X^{1/(2r)} \ll H_r(1) \ll X^{1/(r+1)+\epsilon}.\]
This was improved by Balog and Ruzsa \cite{balog-ruzsa:2001} soon after, obtaining $H_r(1) \asymp X^{1/(r+1)}$. Later, Keil \cite{keil:2013} examined the quantity $H_r(k)$ for all $k\geq 0$ and obtained the bounds 
\begin{alignat*}{3}
    X^{k/(r+1)} \ll &\,H_r(k) \ll X^{k/(r+1)} \quad\quad\; &&\text{if } k < 1+1/r, \\
    X^{1/r}\log X \ll &\,H_r(k) \ll X^{1/r} (\log X)^2\quad\quad\;\; &&\text{if } k=1+1/r, \\
    X^{k-1} \ll &\,H_r(k) \ll X^{k-1} \quad\qquad\;\; &&\text{if } k> 1+1/r,
\end{alignat*}
where implicit constants may depend on $k$ and $r$. Thus, Theorem \ref{bounds} furnishes bounds of equal strength to \cite{keil:2013} when $k \neq 1+1/r$, and a stronger result in the critical case $k = 1+1/r$. 

Our approach to the critical and sub-critical cases are for the most part recreations of the methods of Balog and Ruzsa \cite{balog-ruzsa:2001} and of Keil \cite{keil:2013} in this new setting. However, some segments can be significantly simplified in this setting by considering certain associated exponential sums instead. In particular, the tight upper bound in the critical case relies in part on a reduction to the exponential sum
\[g(N;\alpha) = \sum_{f \in \calM_N} a_r(f)e(f\alpha),\]
where $\calM_N$ is the set of monic polynomials of degree $N$. Similarly, we are able to avoid much of the work required in the integer setting for the sub-critical case by passing to $g(N;\alpha)$.

We note that, somewhat surprisingly, none of the methods are obstructed by the positive characteristic of $\F_q[t]$, as is often the case in function field analogues (see, for example, \cite{liu-wooley:2010} or \cite{effinger-hayes:1991} for discussion on characteristic interference in Weyl differencing methods over $\F_q[t]$). Characteristic obstructions notwithstanding, results in the function field setting often provide insight on what to expect in the integer setting; this is one motivation for working in function fields.

We begin in Section 2 with the introduction of necessary terminology and a pair of preliminary lemmata. Sections 3 and 4 are devoted to the sub-critical and critical cases of Theorem \ref{bounds}, respectively. Finally, in Section 5 we obtain the asymptotic formula of Theorem \ref{asymp-formula}, which in turn implies the super-critical bounds of Theorem \ref{bounds}.

In this paper we use Vinogradov's notation, where $f \ll g$ means that $f(x) \leq Cg(x)$ for some sufficiently large constant $C>0$. Equivalently, we will often use the notation $f = O(g)$. The notation $f = o(g)$ means that for any $C>0$, we have $f(x) < Cg(x)$ when $x$ is large enough. Finally, we use the notation $f \asymp g$ if $f \ll g$ and $g \ll f$, and write $f \sim g$ if $f = g + o(g)$.

\section{Notation and Preliminary Results}

While many of the techniques used in this paper mirror the fairly standard machinery used in the integer setting, there are quite a few subtle details which one must take into account in this new setting. The first task we undertake in this section is thus to introduce the notation and background results necessary for an analysis of the quantity $I_r(k)$. To close the section, we prove a pair of lemmata which supply much of the control needed in the critical and sub-critical cases of Theorem \ref{bounds}.

We write $q = p^h$ for a  fixed prime $p$ and natural number $h$, and fix an integer $r \geq 2$. The variables $d,f,h,\ell$ and $u$ will be reserved throughout for elements of $\F_q[t]$, while we use $\alpha$ and $\beta$ for general elements of $\F_q((1/t))$. We use $\calM_N$ to refer to the set of monic polynomials of degree $N$ in $\F_q[t]$, and $\calM$ to denote the set of all monics in $\F_q[t]$. It will be useful throughout to let $(d,f)$ denote the greatest common (monic) divisor of $d$ and $f$, and similarly to let $[d,f]$ denote the least common (monic) multiple of $d$ and $f$. Recalling that $\alpha \in \F_q((1/t))$ may be written as
\[\alpha = \sum_{i=-\infty}^M \alpha_it^i,\]
for some $M<\infty$, where $\alpha_i \in \F_q$ for each $i$, we can then define $|\alpha| = q^{\ord(\alpha)}$. We adopt the convention that $\ord(0) = -\infty$ and $|0| = 0$. We are then able to define the torus on $\F_q((1/t))$ to be
\[\T = \{\alpha \in \F_q((1/t)): \; |\alpha|<1\}.\]
Just as for the integers, this allows us to uniquely decompose $\alpha \in \F_q((1/t))$ as $\alpha = \lfloor \alpha \rfloor + \{ \alpha \}$, where $\lfloor \alpha \rfloor \in \F_q[t]$ and $\{ \alpha \} \in \T$. We then define $\|\alpha\| = |\{\alpha\}|$. We also make note of the inequality $|\alpha + \beta| \leq \max\{|\alpha|,|\beta|\}$.

We are now prepared to formally define the additive character $e(\cdot)$ for $\F_q((1/t))$. Now for $q = p^h$, we have the non-trivial additive character on $\F_q$ given by $e_q(c) = \exp(2\pi i\tr(c)/p)$, where $\tr(\cdot):\F_q \rightarrow \F_p$ is the standard trace map given as in \eqref{trace}. (Of course here $\pi$ denotes the number, and \emph{not} an irreducible polynomial.)
From this we can induce the desired character $e(\cdot):\F_q((1/t)) \rightarrow \C$ by defining $e(\alpha) = e_q(\alpha_{-1})$. 

The essential orthogonality relation follows easily from the above definition, taking the shape
\[\int_\T e(f\alpha) \d\alpha = \begin{cases}
    0 \quad &\text{when } f \in \F_q[t]\setmin\{0\}, \\
    1 \quad &\text{when } f = 0.
\end{cases}\]
We will also make regular use throughout of Parseval's identity
\[\int_\T \bigg|\sum_{f \in \F_q[t]} c_f e(f\alpha)\bigg|^2 \d\alpha = \sum_{f \in \F_q[t]} |c_f|^2,\]
and the three similar relations
\<\label{sum-ident-non-monic}
    \sum_{|h|<q^m}e(h\alpha) = \begin{cases}
    q^{m} \quad &\text{if } \text{ord}\{\alpha\} < -m, \\
    0 \quad &\text{else}
    \end{cases}
\> 
\<\label{sum-ident-Mn}
    \sum_{h \in \calM_m} e(h\alpha) = \begin{cases}
    e_q(\alpha_{-m-1})q^{m} \quad &\text{if } \text{ord}\{\alpha\} < -m, \\
    0 \quad &\text{else,}
    \end{cases}
\>
and
\<\label{sum-bound-monic}
    \sum_{\substack{h \in \calM \\ |h| \leq q^m}} e(h\alpha) \ll \min\{q^m,\|\alpha\|^{-1}\},
\>
all of which follow quickly from the orthogonality relation and from each other. The precision of \eqref{sum-ident-non-monic} and \eqref{sum-ident-Mn} in comparison to their integer analogues is one of the primary advantages of the function field. For convenience throughout, we define
\[G(\alpha) = \sum_{\substack{f \in \calM \\ |f| \leq q^N}}a_r(f) e(f\alpha),\]
so that $I_r(k) = \int_\T |G(\alpha)|^k \d\alpha$.

In order to apply the circle method in Section 5 we must construct suitable major and minor arcs. It is natural in this problem to consider an adaptation of the usual major and minor arc dissection which focuses on rational approximations with denominators which are $r$th powers. Let
\[\grM_N(f^r,\ell) = \{\alpha \in \T: |f^r\alpha - \ell|<q^{Q-N}\}.\]
Then for a parameter $Q = \delta N$ for $\delta>0$, we define the collection of major arcs $\grM_N = \grM_N(Q)$ to be the union of all arcs $\grM_N(f^r,\ell)$ such that $0 \leq |\ell|<|f|^r \leq q^Q$ with $f$ squarefree and monic, and such that $(\ell,f^r)$ is $r$-free. We then define the associated minor arcs to be $\grm_N = \grm_N(Q) = \T \setmin \grM_N(Q)$, requiring for now only that $\delta <1/2$ to ensure disjoint major arcs. It is a convenient by-product of the polynomial setting that arcs of this shape are either equal or disjoint; this will simplify the application of the circle method. For convenience in Section 3, we also define the set of arcs $\grM(f)$ to be the union of all arcs $\grM(f^r,\ell)$ for a fixed $f$ such that $|\ell|<|f|^r$ and $(\ell,f^r)$ is $r$-free. It is easy to see then that provided $|f|^r\leq q^Q$, the number of arcs in $\grM(f)$ is
\<\label{Phi_r}
\Phi_r(f) = |f|^r \prod_{\substack{\pi \in \calM \\ \pi|f}} \left(1 - \dfrac{1}{|\pi|^r}\right).
\>
We note the standard bounds $|f|^r \ll \Phi_r(f) < |f|^r$. To cap off this considerable selection of notation, we define the standard function field analogue of the M\"obius function, given by
\[\mu(f) = \begin{cases}
    (-1)^t \quad &\text{if } f = \pi_1...\pi_t \text{ and } f \text{ is squarefree}, \\
    0 \quad &\text{else.}
\end{cases}\]
The M\"obius function is exceptionally controlled over $\F_q[t]$, having the exact identity
\<\label{mobius}
\sum_{m \in \calM_n} \mu(m) = \begin{cases}
    1 \quad &{\rm when } \;n = 0,\\
    -q \quad &{\rm when } \;n = 1, \\
    0 \quad &{\rm otherwise}.
\end{cases}
\>

Our principal tool in the subcritical case will be the polynomial analogue of the Fej\'er kernel, given by
\<\label{Fejer}
F(\alpha) = \sum_{|f| < q^N} \bigg(1-\dfrac{|f|}{q^N}\bigg)e(f\alpha) = q^{-N}(q-1)\sum_{k=0}^{N-1} q^k\sum_{|f| \leq q^k} e(f\alpha).
\>
This form of the Fej\'er kernel is clearly non-negative by \eqref{sum-ident-non-monic}, and a quick computation shows that $\int_\T F(\alpha)\d\alpha \leq 1$. As a result, if we define the (non-monic) exponential sums
\<\label{g_1-g_2} 
g_1(\alpha) = \sum_{|f| < q^N} a_r(f)e(f\alpha),\quad g_2(\alpha) = \sum_{|f| < q^N} \bigg(1-\dfrac{|f|}{q^N}\bigg) a_r(f) e(f\alpha),
\>
then we can observe that $g_2(\alpha) = \int_\T F(\alpha-\beta)g_1(\beta)d\beta$ to obtain the useful relation
\<\label{g_2}
\int_\T |g_2(\alpha)|\d\alpha \leq \int_\T \int_\T F(\alpha-\beta)|g_1(\beta)|\d\beta \d\alpha \ll \int_\T |G(\beta)| \d\beta.
\>
It is by this inequality that we will furnish the lower bound in the subcritical case.

The unrestricted monic exponential sum
\[G(\alpha) = \sum_{\substack{0 \leq |f| \leq q^N \\ f \in \calM}} a_r(f)e(f\alpha)\]
is, for the most part, vulnerable to the same approaches as the analogous integer exponential sum. The key advantage in this setting is the ease with which we may convert to simpler exponential sums, in particular the restricted monic exponential sum
\[g(N;\alpha) = \sum_{f \in \calM_N} a_r(f)e(f\alpha)\]
and the non-monic exponential sum $g_1(\alpha)$ defined above. While it is typical to omit the parameter $N$ from the notation, it will be necessary at times to work with $g(n;\alpha)$ for $n<N$, and so we write this parameter explicitly. It is with 
the restriction to $g(N;\alpha)$ that we can obtain the desired upper bounds, and with the extension to $g_1(\alpha)$ that we can obtain the desired lower bounds. This approach may be utilized for a more general class of exponential sums, provided the moments of $g(N;\alpha)$ are sufficiently well-behaved. 

We are now prepared to recreate the necessary preliminary lemmata from \cite{balog-ruzsa:2001} and \cite{keil:2013} in the new setting. This is simplified by working with the restricted exponential sums $g(N;\alpha)$. It is a well-known characterization of $a_r(f)$ that $a_r(f) = \sum_{\substack{d \in \calM \\ d^r|f}}\mu(d)$, and so using this, we define
\[c_i(f) = \sum_{\substack{d \in \calM_i \\ d^r|f}} \mu(d), \quad \text{so that} \quad a_r(f) = \sum_{i=0}^{\lfloor N/r \rfloor} c_i(f).\]
It is obvious that for $i>\deg(f)/r$, we have $c_i(f)=0$.
\begin{lemma}\label{c-ident}
    Let $i$ and $K$ be non-negative integers with $i \leq K/r$. Then one has that
    \[\sum_{f \in \calM_K} |c_i(f)|^2 \ll q^{K+(1-r)i}.\]
\end{lemma}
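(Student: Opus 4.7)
The plan is to expand
\[
|c_i(f)|^2 = \sum_{\substack{d_1, d_2 \in \calM_i \\ d_1^r \mid f,\; d_2^r \mid f}} \mu(d_1)\,\mu(d_2),
\]
swap with the outer sum over $f \in \calM_K$, and reduce to a divisor count. The clean algebraic identity that drives everything is $[d_1^r, d_2^r] = [d_1, d_2]^r$ in $\F_q[t]$, which collapses the two divisibilities into the single condition $[d_1, d_2]^r \mid f$. Hence, for each pair $(d_1, d_2)$ with $r\deg[d_1, d_2] \leq K$, the number of $f \in \calM_K$ contributing is exactly $q^{K - r\deg[d_1, d_2]}$, and zero otherwise.

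From there, applying the triangle inequality ($|\mu(d_1)\mu(d_2)| \leq 1$) and dropping the constraint $r\deg[d_1, d_2] \leq K$ (which can only enlarge a nonnegative sum) reduces the problem to estimating
\[
q^K \sum_{d_1, d_2 \in \calM_i} q^{-r\deg[d_1, d_2]}.
\]
Using $\deg[d_1, d_2] = 2i - \deg(d_1, d_2)$ and grouping pairs by their greatest common divisor $e = (d_1, d_2)$, I would bound the number of pairs with fixed gcd $e$ of degree $j \leq i$ crudely by $q^{2(i-j)}$, and note that there are $q^j$ such monic $e$. The inner sum then becomes a geometric series in $j$ whose $j = i$ term dominates, delivering the claimed bound $q^{K + (1-r)i}$.

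I do not expect any substantive obstacle: the entire argument is a divisor-counting estimate, and positive characteristic does not interfere with either the M\"obius expansion of $a_r$ or the identity $[d_1^r, d_2^r] = [d_1, d_2]^r$. The only small point to verify is that enlarging the summation region by dropping $r\deg[d_1, d_2] \leq K$ is harmless, which is immediate from the positivity of the summands after absolute values have been taken.
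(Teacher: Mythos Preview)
Your proof is correct and takes essentially the same approach as the paper: expand the square, swap the order of summation, count $f\in\calM_K$ divisible by $[d_1,d_2]^r$, and reduce via the lcm--gcd relation to a geometric series dominated by its top term. The paper's only difference is cosmetic---it bounds $|(d_1,d_2)|^r$ by $\sum_{h\mid (d_1,d_2)}|h|^r$ and sums over divisors $h$ rather than grouping directly by $\deg(d_1,d_2)$ as you do; your route is slightly more direct.
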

\begin{proof}
    This is a fairly simple computation which follows in the footsteps of \cite{balog-ruzsa:2001} and \cite{keil:2013}, and so we will be brief. First we note trivially the bound
    \[\sum_{f \in \calM_K} |c_i(f)|^2 \leq \sum_{f \in \calM_K} \bigg(\sum_{\substack{d^r|n \\ d \in \calM_i}} 1\bigg)^2 = \sum_{d_1 \in \calM_i} \sum_{d_2 \in \calM_i} \sum_{\substack{f \in \calM_K \\ [d_1,d_2]^r|f}}1.\]
    Clearly if $q^K<\big|[d_1,d_2]\big|^r$, then the inner sum is 0. If $q^K \geq \big|[d_1,d_2]\big|^r$, the inner sum is equal to $q^K/\big|[d_1,d_2]\big|^r$. Using the trivial bound $|f|^r < \sum_{h|f}|h|^r$ along with the fact that $[d_1,d_2] = d_1d_2/(d_1,d_2)$, we get
    \begin{align*}
        \sum_{f \in \calM_K} |c_i(f)|^2 &\leq \sum_{\substack{d_1,d_2 \in \calM_i \\ \big|[d_1,d_2]\big|^r \leq q^{K}}} \dfrac{q^K}{|d_1^rd_2^r|}\sum_{\substack{h \in \calM \\ h|(d_1,d_2)}}|h|^r.
    \end{align*}
    
    Swapping the order of summation and noting that we can factor the sum over $d_1,d_2$, we get
    \begin{align*}
        \sum_{f \in \calM_K} |c_i(f)|^2 &\leq q^K \sum_{h \in \calM} |h|^r \bigg(\sum_{\substack{d \in \calM_i \\ h|d}}|d|^{-r}\bigg)^2.
    \end{align*}
    Moving a factor of $|h|^{2r}$ inside, we can directly compute
    \[
        \sum_{f \in \calM_K} |c_i(f)|^2 \leq q^K \sum_{h \in \calM} |h|^{-r} \bigg(\sum_{d \in \calM_{i - \deg(h)}} |d|^{-r}\bigg)^2 = q^K \sum_{h \in \calM} |h|^{-r}\bigg(\dfrac{q^{(1-r)i}}{|h|^{1-r}}\bigg)^2.
    \]
    Finally, sorting by degree, we get from the last expression that
    \[
        \sum_{f \in \calM_K} |c_i(f)|^2 \leq q^K \sum_{d=0}^i q^{(r-1)d+(2-2r)i} \ll q^{K+(1-r)i}.
    \]
    This completes the proof.
\end{proof}

The decomposition
\[a_r(f) = \sum_{i=0}^{\lfloor N/r \rfloor} c_i(f)\]
is useful for many of our results, but in order to optimize in the subcritical case, we need a more efficient decomposition. The $c_i(f)$ with $i$ small are small in the $L^1$-norm, while the $c_i(f)$ with $i$ large are small in the $L^2$-norm, and so choosing an appropriate cutoff point and interpolating will give us the optimal bound. For this purpose we let $D = \lfloor N/(r+1)\rfloor$, and define
\<
c_*(f) = \sum_{i>D} c_i(f), \quad \text{so that} \quad a_r(f) = \sum_{i\leq D} c_i(f) + c_*(f).
\>
In the same spirit, define
\[T_i(\alpha) = \sum_{f \in \calM_N} c_i(f)e(f\alpha) = \sum_{f \in \calM_N}\sum_{\substack{d \in \calM_i \\ d^r | f}} \mu(d)e(f\alpha)
\quad\text{and}\quad
T_*(\alpha) = \sum_{f \in \calM_N} c_*(f)e(f\alpha),\]
so that we have the decompositions
\<\label{T_*}
g(N;\alpha) = \sum_{i=1}^{\lfloor N/r \rfloor} T_i(\alpha) = \sum_{i=1}^{D} T_i(\alpha) + T_*(\alpha).
\>
\begin{lemma}\label{T_i}
    Let $0 \leq i \leq \lfloor N/r \rfloor$ and $k \in [1,2]$. One has that
    \[\int_\T |T_i(\alpha)|^k \d\alpha \ll q^{i+(N-ri)(k-1)}\]
    and as a result,
    \[\int_\T |T_*(\alpha)|^2 \d\alpha \ll q^{N+(1-r)D}.\]
\end{lemma}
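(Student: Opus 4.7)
My plan is to obtain complementary $L^1$ and $L^2$ bounds for $T_i$ and interpolate between them via Hölder's inequality; the bound for $T_*$ will then follow from a triangle inequality in $L^2$ and a geometric series estimate.

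The $L^2$ bound is immediate: by Parseval's identity one has $\int_\T |T_i(\alpha)|^2 \d\alpha = \sum_{f \in \calM_N} |c_i(f)|^2$, and Lemma \ref{c-ident} with $K = N$ gives this is $\ll q^{N+(1-r)i}$. For the $L^1$ bound, I would factor $f = d^r g$ with $g \in \calM_{N-ri}$ to rewrite
\[
T_i(\alpha) = \sum_{d \in \calM_i} \mu(d) \sum_{g \in \calM_{N-ri}} e(d^r g \alpha).
\]
By identity \eqref{sum-ident-Mn}, the inner sum over $g$ has absolute value $q^{N-ri}$ on the set where $\ord\{d^r\alpha\} < -(N-ri)$ and vanishes off it. Because $e$ is trivial on $\F_q[t]$, one has $e(f\{d^r\alpha\}) = e(fd^r\alpha)$ for any $f \in \F_q[t]$, so the pushforward of Haar measure under $\alpha \mapsto \{d^r\alpha\}$ has Fourier coefficient $\delta_{fd^r,0} = \delta_{f,0}$ (using $d \neq 0$); the pushforward is thus Haar measure itself, and the set above has measure $q^{-(N-ri)}$. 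Each $d$ therefore contributes exactly $1$ to the integral, and the triangle inequality gives $\int_\T |T_i(\alpha)| \d\alpha \leq q^i$.

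To interpolate for $k \in [1,2]$, I apply Hölder's inequality with conjugate exponents $1/(2-k)$ and $1/(k-1)$ (the endpoint cases $k=1, 2$ being direct), yielding
\[
\int_\T |T_i|^k \d\alpha \leq \left(\int_\T |T_i|\,\d\alpha\right)^{2-k}\!\left(\int_\T |T_i|^2\,\d\alpha\right)^{k-1} \ll q^{i(2-k) + (N+(1-r)i)(k-1)},
\]
and routine rearrangement of the exponent yields $q^{i + (N-ri)(k-1)}$, as claimed.

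For the bound on $T_*$, note that $c_i(f) = 0$ whenever $i > N/r$, so one may write $T_*(\alpha) = \sum_{D < i \leq \lfloor N/r \rfloor} T_i(\alpha)$. The triangle inequality in $L^2$ combined with the $L^2$ estimates above produces
\[
\|T_*\|_2 \leq \sum_{i > D} \|T_i\|_2 \ll q^{N/2}\sum_{i > D} q^{(1-r)i/2},
\]
and since $r \geq 2$ this is a geometric series of ratio at most $q^{-1/2}$, dominated by its first term, so $\|T_*\|_2^2 \ll q^{N + (1-r)D}$. The only genuinely nontrivial step is the measure-preservation computation underpinning the $L^1$ bound; everything else is standard Hölder interpolation and summation.
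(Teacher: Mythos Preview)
Your proof is correct and follows essentially the same route as the paper: establish the $L^1$ and $L^2$ bounds for $T_i$ (the paper computes the $L^1$ bound by summing over the explicit arcs $\grM_N(d^r,\ell)$ rather than via your pushforward-of-Haar argument, but this amounts to the same calculation), interpolate by H\"older, and then sum a geometric series for $T_*$. For the $T_*$ bound the paper writes $\int_\T |T_*|^2\,d\alpha = \sum_{i>D}\int_\T|T_i|^2\,d\alpha$ ``by Parseval,'' whereas your use of Minkowski's inequality is arguably the cleaner justification of the same estimate.
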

\begin{proof}
    We prove the first result by first obtaining the $L^1$- and $L^2$-bounds and then interpolating. The $L^2$-bound follows from Parseval's identity and the previous lemma. We have
    \[
        \int_\T |T_i(\alpha)|^2\d\alpha = \int_\T \bigg|\sum_{f \in \calM_N} c_i(f)e(f\alpha)\bigg|^2 \d\alpha = \sum_{f \in \calM_N} |c_i(f)|^2 
    \]
    by Parseval's identity. Applying the previous lemma, we get
    \[
        \int_\T |T_i(\alpha)|^2 \d\alpha \ll q^{N+(1-r)i}
    \]
    so long as $0 \leq i \leq N/r$. For the $L^1$-bound, we use the other representation of $T_i$ to get
    \[\int_\T|T_i(\alpha)| \d\alpha = \int_\T \bigg|\sum_{d \in \calM_i} \mu(d)\sum_{h \in \calM_{N-ri}} e(hd^r\alpha)\bigg|\d\alpha.\]
    
    Recalling \eqref{sum-ident-Mn}, we see that that the inner sum is non-zero only when $\alpha$ is an element of a suitable arc $\grM_N(d^r,\ell)$ for $\ell \in \F_q[t]$ with $|\ell| < |d|^r$. While the major arcs are only disjoint for $Q$ sufficiently small, we require little precision for this result, and so we use the inequality
    \[\int_\T |T_i(\alpha)| \d\alpha \leq \sum_{d \in \calM_i} \sum_{\substack{|\ell| < q^{ri} \\ (\ell,d^r)\;r{\rm -free}}}\int_{\grM_N(d^r,\ell)} \bigg|\sum_{h \in \calM_{N-ri}} e(hd^r \alpha)\bigg|\d\alpha.\]
    By \eqref{sum-ident-Mn}, we get
    \begin{align*}
        \int_\T|T_i(\alpha)| \d\alpha &\ll \sum_{d \in \calM_i}\sum_{|\ell| < q^{ri}}\int_{\grM(d^r,\ell)} q^{N-ri}\d\alpha \ll q^i.
    \end{align*}
    This is the desired bound for $k=1$. For $1 < k < 2$, we can interpolate the $L^1$- and $L^2$-bounds using H\"older's inequality to get
    \[\int_\T |T_i(\alpha)|^k \d\alpha \ll q^{i+N(k-1)-ri(k-1)}.\]
    Finally, we note that by Parseval's identity,
    \[\int_\T |T_*(\alpha)|^2 \d\alpha = \sum_{i>D}\int_\T |T_i(\alpha)|^2 \d\alpha \ll \sum_{i>D} q^{N+(1-r)i} \ll q^{N+(1-r)D}.\]
    This completes the proof.
\end{proof}

We observe that Lemma \ref{T_i} is essentially the analogue of Lemma 2.2 in \cite{keil:2013} on a dyadic restriction of the exponential sum. This restriction allows us to handle the following upper bound without much of the sophisticated machinery needed in the integer setting.

\section{The Sub-critical Case $k<1+1/r$}

 We remark again that, outside of this additional decomposition, the strategies of this section are essentially direct translations of those of \cite{balog-ruzsa:2001} and \cite{keil:2013} into the function field setting.
\begin{lemma}\label{sub-upper}
    For $k < 1+1/r$, one has
    \[\int_\T |G(\alpha)|^k \d\alpha \ll q^{Nk/(r+1)}.\]
\end{lemma}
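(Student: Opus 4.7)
The plan is to reduce the full sum $G$ to the restricted sums $g(n;\alpha)$ and then bound the latter using the decomposition introduced before Lemma \ref{T_i}. Writing $G(\alpha) = \sum_{n=0}^{N} g(n;\alpha)$, I will first establish the per-degree estimate
\[
\int_\T |g(n;\alpha)|^k \d\alpha \ll q^{nk/(r+1)}
\]
uniformly in $n$, with implicit constant depending only on $k,q,r$. Summing over $n$ then collapses a geometric series to the target $q^{Nk/(r+1)}$: for $k \geq 1$ by Minkowski's inequality applied to $G = \sum_n g(n;\alpha)$, yielding $\|G\|_k \leq \sum_n \|g(n;\alpha)\|_k \ll q^{N/(r+1)}$; and for $k \leq 1$ more cheaply by $\int|G|^k \leq (\int|G|)^k$ together with the bound $\int|G| \ll q^{N/(r+1)}$.

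For the per-degree estimate, I decompose $g(n;\alpha) = \sum_{i=0}^{D_n} T_i(\alpha) + T_*(\alpha)$ with $D_n = \lfloor n/(r+1)\rfloor$, using the degree-$n$ analogues of the objects in \eqref{T_*}. Applying the bounds of Lemma \ref{T_i} at degree $n$ gives $\|T_*\|_k \leq \|T_*\|_2 \ll q^{n/(r+1)}$ for $k \leq 2$, and for $k \in [1,2]$ one has $\|T_i\|_k \ll q^{(i(1 - r(k-1)) + n(k-1))/k}$, an exponent that at $i = D_n$ simplifies exactly to $n/(r+1)$. For $1 \leq k < 1 + 1/r$, Minkowski's inequality gives
\[
\|g(n;\alpha)\|_k \leq \sum_{i=0}^{D_n}\|T_i\|_k + \|T_*\|_k,
\]
and the ratio of consecutive main terms is the constant $\rho = q^{(1 - r(k-1))/k}$, strictly greater than $1$ precisely because $k < 1 + 1/r$. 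Hence $\sum_i \|T_i\|_k$ is a geometric series dominated by its largest term $\|T_{D_n}\|_k \ll q^{n/(r+1)}$, with implicit constant $\rho/(\rho - 1)$ depending only on $k,q,r$. Combined with the tail estimate this yields $\|g(n;\alpha)\|_k \ll q^{n/(r+1)}$ in the range $1 \leq k < 1+1/r$. For $0 < k < 1$, the same conclusion follows directly from the $k = 1$ case via the inequality $\int|g(n;\alpha)|^k \leq (\int|g(n;\alpha)|)^k$ (Jensen's inequality on the probability space $\T$).

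The principal obstacle is that strict subcriticality $k < 1 + 1/r$ is genuinely needed: as $k \to (1 + 1/r)^-$, the ratio $\rho$ tends to $1$ and the implicit constant $\rho/(\rho - 1)$ blows up. This is consistent with, and in fact foreshadows, the logarithmic enhancement that manifests in the critical case $k = 1 + 1/r$; the present argument therefore cannot be pushed beyond the subcritical range without additional input.
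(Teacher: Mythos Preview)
Your proof is correct and follows the same overall strategy as the paper: reduce $G$ to the restricted sums $g(n;\alpha)$, decompose each into $\sum_{i\le D_n} T_i + T_*$, and invoke Lemma \ref{T_i}. The difference lies only in how the pieces are reassembled. The paper works directly with $\int|g|^k$ and, to avoid the $D^{k-1}$ loss from the crude bound $|\sum_i x_i|^k \ll D^{k-1}\sum_i|x_i|^k$, introduces polynomial weights $(1+D-i)^{-1}$ and applies H\"older's inequality; it then repeats this weighted-H\"older device to pass from $g(N;\alpha)$ to $G$. You instead work with $L^k$-norms and apply Minkowski's inequality, so that $\sum_i \|T_i\|_k$ becomes a genuine geometric series with ratio $q^{(1-r(k-1))/k}>1$, and likewise $\sum_n \|g(n;\cdot)\|_k$ is geometric. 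This is a bit cleaner and makes the dependence on $k$ more transparent---the constant $\rho/(\rho-1)$ visibly blows up as $k\to(1+1/r)^-$, as you note. Both arguments rest on the same inputs from Lemma \ref{T_i} and yield the same bound.
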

\begin{proof}
    First consider the restricted exponential sum $g(N;\alpha)$. We begin with the case $1 < k < 1+1/r$, which will imply the full result by an application of H\"older's inequality. To achieve the bound in this case, we decompose our exponential sum as in (\ref{T_*}). A full decomposition according to degree would be inefficient at the tail end, as the $T_i$ for larger $i$ have especially small $L^2$ norms. For the $T_i$ with $i\leq D$, we apply a polynomial weight as in the proof of Theorem \ref{bounds} in order to save a logarithmic factor. We write
    \begin{align*}
        \int_\T |g(N;\alpha)|^k \d\alpha &= \int_\T \bigg|\sum_{i=0}^{D} (1+D-i)^{-1}(1+D-i)T_i(\alpha)+ T_*(\alpha)\bigg|^k \d\alpha \\
        &\ll \bigg(\sum_{i=0}^D (1+D-i)^{-\frac{k}{k-1}}\bigg)^{\frac{1}{k-1}}\int_\T\sum_{i=0}^{D} (1+D-i)^k|T_i(\alpha)|^k\d\alpha + \int_\T | T_*(\alpha)|^k \d\alpha.
    \end{align*}
    The first sum is bounded since $k>1$, and so applying H\"older's inequality to the $T_*(\alpha)$ term, we get
    \[\int_\T |g(N;\alpha)|^k \d\alpha \ll \sum_{i=0}^{D} (1+D-i)^k\int_\T|T_i(\alpha)|^k\d\alpha + \bigg(\int_\T | T_*(\alpha)|^2 \d\alpha\bigg)^{k/2}.\]
    
    Now we can apply the results of Lemma \ref{T_i} to get
    \[\int_\T |g(N;\alpha)|^k \d\alpha \ll q^{N(k-1)}\sum_{i=0}^{D} (1+D-i)^kq^{i(1-r(k-1))} + \big(q^{N+(1-r)D}\big)^{k/2}.\]
    Since $k<1+1/r$, the exponential factor in the above sum has positive power, and so
    \[q^{N(k-1)}\sum_{i=0}^{D} (1+D-i)^k q^{i(1-r(k-1))} \ll q^{N(k-1)}q^{D(1-r(k-1))} \ll q^{Nk/(r+1)}.\]
    But we also have that
    \[\big(q^{N+(1-r)D}\big)^{k/2} = \big(q^{2N/(r+1)}\big)^{k/2} = q^{Nk/(r+1)},\]
    and so we obtain the desired result for $1<k<1+1/r$. For $k\leq 1$, we simply observe that if $0<\delta<1/r$, then
    \[\int_\T |g(N;\alpha)|^k \d\alpha \ll \bigg(\int_\T |g(N;\alpha)|^{1+\delta} \d\alpha\bigg)^{k/(1+\delta)} \ll q^{Nk/(r+1)}.\]

    The same argument can be repeated with the decomposition $G(\alpha) = \sum_{n \leq N} g(n;\alpha)$ to obtain the desired result. Apply H\"older for the case $k>1$ to observe that
    \[\int_\T |G(\alpha)|^k \d\alpha \ll \sum_{1\leq n \leq N} (N+1-n)^k\int_\T |g(n;\alpha)|^k \d\alpha \ll q^{Nk/(r+1)}.\]
    For $k\leq 1$, H\"older interpolation again implies the result.
\end{proof}

The effort to obtain the lower bound is much more involved. We work with the weighted sum $g_2(\alpha)$ defined in \eqref{g_1-g_2} which is easier to control, and then apply \eqref{g_2} to obtain the lower bound. A decomposition of $g_2$ similar to the $T_i$ decomposition is used here. Define the averaging function
\[F_h(\alpha) = \sum_{\substack{|f|\leq q^N \\ h|f}}\bigg(1-\dfrac{|f|}{q^N}\bigg)e(f\alpha) = \dfrac{1}{|h|} \sum_{|a| < |h|} F\bigg(\alpha - \dfrac{a}{h}\bigg),\]
so that we have the decomposition
\begin{align*}
    g_2(\alpha) &= \sum_{|f| \leq q^N} \bigg(1-\dfrac{|f|}{q^N}\bigg)\sum_{i<D} c_i(f)e(f\alpha) + \sum_{|f| \leq q^N} \bigg(1-\dfrac{|f|}{q^N}\bigg)c_*(f) e(f\alpha) \\
    &= \sum_{|d| \leq q^D} \mu(d)F_{d^r}(\alpha) - \sum_{|d| \leq q^D}\mu(d) + \sum_{|f| \leq q^N} \bigg(1-\dfrac{|f|}{q^N}\bigg)c_*(f) e(f\alpha).
\end{align*}
The middle sum compensates for the terms associated to $f = 0$ in $F_h(\alpha)$, and is $O(1)$ in the setting of $\F_q[t]$ by \eqref{mobius}. 

Label the first sum by $R(\alpha)$. It is reasonable to expect that the last term is of smaller order on most of $\T$, as $c_*(f)$ is only non-zero for large $f$, in which case the smoothing factor is small. Thus our goal is to show that on a restricted portion of $\T$, the first sum is the desired order of magnitude, and the last sum is of smaller order. 

Denote the first sum by $R(\alpha)$. Substituting the definition of $F_{d^r}(\alpha)$, we get
\[R(\alpha) = \sum_{|d| \leq q^D} \dfrac{\mu(d)}{|d|^r}\sum_{|a| < |d|^r} F\bigg(\alpha - \dfrac{a}{d^r}\bigg).\]
We may sort the inner sum by the largest degree monic $r$-th power dividing both $a$ and $d^r$, which we label $m^r$. Noting that $\mu(d)$ is supported on squarefrees, and then reindexing $d/m$ as $d$, we get
\[R(\alpha) = \sum_{|d| \leq q^D}\dfrac{\mu(d)}{|d|^r}\sum_{\substack{|a| < |d|^r \\ (a,d^r) \; r\text{-free}}}F\bigg(\alpha - \dfrac{a}{d^r}\bigg)\sum_{\substack{|m| \leq q^D/|d| \\ (m,d) = 1 \\ m \in \calM}} \dfrac{\mu(m)}{|m|^r}.\]
We write
\[b_d = \sum_{\substack{|m| \leq q^D/|d| \\ (m,d)=1 \\ m \in \calM}}\dfrac{\mu(m)}{|m|^r} \quad {\rm and} \quad H_d(\alpha) = \dfrac{1}{|d|^r}\sum_{\substack{|a| < |d|^r \\ (a,d^r) \; r\text{-free}}}F\bigg(\alpha - \dfrac{a}{d^r}\bigg).\]
so that
\[R(\alpha) = \sum_{|d|\leq q^D}\mu(d)b_dH_d(\alpha).\]

The factor $b_d$ is well-known, and is equal to $L(r,\chi_0)^{-1} + O((|d|/q^D)^{r-1})$, where $\chi_0$ is the trivial Dirichlet character modulo $d$. To achieve sufficiently tight upper and lower bounds, first note that \eqref{mobius} implies that for $i>1$ at most $(1/2)q^i$ of the polynomials $m$ of degree $i$ have $\mu(m) = 1$. Thus it is easy to deduce that
\<
b_d \leq 1+\dfrac{1}{2}(\zeta_q(r)-\zeta_q(r)^{-1}) \leq 7/4.
\>
To obtain the lower bound, we observe that
\[b_d \geq 1 - \sum_{\substack{m \in \calM \\ |m| > 1 \\ (m,d) = 1}}\dfrac{1}{|m|^r} = 2 - L(r,\chi_0) = 2 - \zeta_q(r)\prod_{\pi | d} (1-|\pi|^{-r}).\]
Provided $d \neq 1$, the product will be at least $3/4$, and so $b_d \geq 1/2$. If $d = 1$ then $b_d \rightarrow \zeta_q(r)^{-1}$, which is at least $1/2$. For convenience, we weaken this slightly so that we have the upper and lower bounds
\<\label{b_d}
1/4 \leq b_d \leq 7/4.
\>

Now we need to understand the contribution of $H_d(\alpha)$. First note that applying \eqref{sum-ident-non-monic} to the second form of $F(\alpha)$ in \eqref{Fejer}, we have whenever $\ord\{\alpha\}<-N$ that 
\[F(\alpha) = q^{1-N}(q-1)\sum_{i=0}^{N-1} q^{2i} = q^{1-N}\bigg(\dfrac{q^{2(N-1)}-1}{q+1}\bigg) > q^N-q.\]
Thus we have for $\alpha \in \grM_N(d^r,\ell)$ that $H_d(\alpha) > (q^N-q)/|d|^r$. In order to prove the desired lower bound for $R(\alpha)$, we need to show that this contribution is not canceled by contributions from the other summands in $R(\alpha)$. While this may not always be the case, we find that we can restrict to a smaller region where this is satisfied, and then show that this smaller region is not too small. We define this region to be
\[\grN(d) = \bigg\{\alpha \in \grM(d): \sum_{\substack{|d'| \leq q^D \\ d' \neq d}}H_{d'}(\alpha)\leq \dfrac{q^{N - 5}}{|d|^r}\bigg\}.\]

While the major arcs $\grM_N(d^r,\ell)$ are only disjoint when we restrict to $|d|< q^{N/(2r)}$, the sets $\grN(d)$ are certainly disjoint for all $d$, since again the $H_d(\alpha)$ are non-negative and $H_d(\alpha) \geq (q^{N-1}-1)/|d|^r$ for $\alpha \in \grN(d) \subseteq \grM(d)$. If $\alpha \in \grN(d)$, we have by \eqref{b_d} that
\[|R(\alpha)|\geq b_d H_d(\alpha) - \sum_{\substack{|d'| \leq q^D \\ d' \neq d}} b_{d'}H_{d'}(\alpha) \geq \dfrac{q^N-q}{4|d|^r} - \dfrac{7q^{N-5}}{4|d|^r} \geq C\dfrac{q^N}{|d|^r}\]
for some positive constant $C$ (bounded below by, say, $1/128$).

Now we wish to give a lower bound on $|\grN(d)|$. First note that for each $d$,
\[|\grM(d)| = q^{-N}\Phi_r(d),\]
and so $|d|^r/q^N \ll |\grM(d)| < |d|^r/q^N$. We want to show that $|\grM(d) \setmin \grN(d)| = o(|d|^r/q^N)$, and to do so we prove the following quasi-orthogonality relation for $H_d(\alpha)$.
\begin{lemma}
    For $|d_1|,|d_2|\leq q^D$ and $d_1 \neq d_2$, one has
    \[\int_\T H_{d_1}(\alpha)H_{d_2}(\alpha)\d\alpha \ll 1.\]
\end{lemma}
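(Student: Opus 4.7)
The plan is to invoke Parseval's identity applied to the explicit Fourier expansion of $H_d$. Using the Fourier series $F(\alpha) = \sum_{|f|<q^N}(1-|f|/q^N) e(f\alpha)$ of the Fej\'er kernel, one finds after interchanging summation that $H_d(\alpha) = \sum_{|f|<q^N}\hat H_d(f) e(f\alpha)$, where
\[
\hat H_d(f) = \frac{1-|f|/q^N}{|d|^r}\, S(f,d), \qquad S(f,d) = \sum_{\substack{|a|<|d|^r \\ (a,d^r)\ r\text{-free}}} e(-fa/d^r)
\]
is a Ramanujan-type sum. Since $H_{d_2}$ is real, Parseval gives
\[
\int_\T H_{d_1}(\alpha) H_{d_2}(\alpha)\,\d\alpha = \frac{1}{|d_1 d_2|^r}\sum_{|f|<q^N}\bigl(1-|f|/q^N\bigr)^2 S(f,d_1)\overline{S(f,d_2)}.
\]

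A short M\"obius inversion on the $r$-freeness condition factors $S$ multiplicatively as $S(f,d) = \mu(d)\prod_{\pi|d}\bigl(1 - |\pi|^r\mathbf{1}[\pi^r|f]\bigr)$. Writing $d_1 = De_1$, $d_2 = De_2$ with $D = (d_1,d_2)$ and $(e_1,e_2)=1$, one splits the product $S(f,d_1)\overline{S(f,d_2)}$ according to whether primes lie in $e_1 e_2$ (appearing once) or in $D$ (appearing squared). Expanding both factors and interchanging summation reduces the Parseval sum to a double sum over divisors $u | e_1e_2$ and $v | D$ of terms of the form $\mu(u)|u|^r Y(v)\,\sigma_N((uv)^r)$, where $Y(v)$ is a bounded multiplicative function and $\sigma_N(n) = \sum_{|f|<q^N,\,n|f}(1-|f|/q^N)^2$.

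Summing a geometric series over the degree of $f/n$ gives a closed form: $\sigma_N(n) = c_q q^N/|n| + A_2|n|/q^N + A_3|n|^2/q^{2N}$ for $|n|\leq q^N$ with explicit constants depending only on $q$, while for $|n|>q^N$ only $f=0$ survives and $\sigma_N(n)=1$. The essential observation is that, if one formally substitutes the leading piece $c_q q^N/|n|$ into the double sum and extends the formula to all $u,v$, the result factors as
\[
c_q q^N\Bigl(\sum_{u|e_1 e_2}\mu(u)\Bigr)\Bigl(\sum_{v|D} Y(v)/|v|^r\Bigr),
\]
whose first factor vanishes by M\"obius because $d_1 \neq d_2$ forces $e_1 e_2 \neq 1$. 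The remaining contributions come from the subleading pieces $A_2|n|/q^N$ and $A_3|n|^2/q^{2N}$ together with the correction needed in the regime $|(uv)^r|>q^N$, where the three-term formula must be replaced by the true value $1$. Both can be estimated directly, and under the hypothesis $|d_i|\leq q^D = q^{\lfloor N/(r+1)\rfloor}$ their total contribution, divided by $|d_1 d_2|^r$, is $O(1)$.

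The main obstacle is the bookkeeping of these residual contributions. The truncation $|n|\leq q^N$ breaks the clean Euler-product factorization responsible for the main-term cancellation, so one must carefully combine the boundary correction from $|(uv)^r|>q^N$ with the lower-order terms of $\sigma_N$ to prevent crude estimation from overwhelming the M\"obius cancellation. The constraint $|d_1|,|d_2|\leq q^D$ plays a decisive role here, since it bounds $|[d_1,d_2]|^r$ and hence the size of the surviving pieces.
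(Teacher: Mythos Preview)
Your Parseval approach is correct and genuinely different from the paper's. The paper stays on the physical side: it bounds the Fej\'er kernel pointwise by $F(\alpha)\ll\min\{q^N,\|\alpha\|^{-2}q^{-N}\}$, expands $H_{d_1}H_{d_2}$ as a double sum over residues $a_1,a_2$, and reduces the integral to a Diophantine count controlled by $m=a_2d_1^r-a_1d_2^r$; the hypothesis $d_1\neq d_2$ enters as $m\neq 0$. You work on the Fourier side, and the same hypothesis enters as $e_1e_2\neq 1$, forcing $\sum_{u\mid e_1e_2}\mu(u)=0$ and killing the main term.

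Your final two paragraphs overstate the difficulty and misidentify the role of the size hypothesis. After subtracting the main term you are left with $\tilde R(n)=\sigma_N(n)-c_qq^N/|n|$, and your own three-term formula shows $|\tilde R(n)|\le|A_2|+|A_3|<1$ for $|n|\le q^N$, while $|\tilde R(n)|=|1-c_qq^N/|n||<1$ for $|n|>q^N$. So $|\tilde R|\ll 1$ uniformly, and no recombination of the two regimes is needed. Inserting this together with the crude bound on the $D$-part gives
\[
\frac{1}{|d_1d_2|^r}\sum_{u\mid e_1e_2}|u|^r\sum_{v\mid D}|Y'(v)|
\le\prod_{\pi\mid e_1e_2}\bigl(1+|\pi|^{-r}\bigr)\prod_{\pi\mid D}\bigl(1+|\pi|^{-2r}\bigr)\le\frac{\zeta_q(r)}{\zeta_q(4r)}\ll 1,
\]
with implied constant depending only on $q$ and $r$. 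In particular the constraint $|d_i|\le q^D$ plays no role whatsoever in your argument, so your route in fact proves a slightly stronger statement than the lemma asserts. What each approach buys: the paper's argument is more elementary (no Ramanujan-sum computation) but leans on the ultrametric structure of the Fej\'er kernel; yours is more algebraic, makes the cancellation mechanism transparent, and transfers to $\Z$ with only cosmetic changes.
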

\begin{proof}
    Let $\ord\{\alpha\} = -K$. We begin by noting that again applying \eqref{sum-ident-non-monic} to the second form of $F(\alpha)$ in \eqref{Fejer}, we can obtain the upper bound
    \[F(\alpha) = q^{-N}(1-1/q)\sum_{i=0}^{\min\{N-1,K-1\}}q^{2i+1} \ll \min\{q^N,\|\alpha\|^{-2}q^{-N}\}.\]
    Because of this, we can deduce that
    \begin{align*}
        F(\alpha)F(\beta) &\leq (F(\alpha)+F(\beta))\min\{F(\alpha),F(\beta)\} \\
        &\ll (F(\alpha)+F(\beta))\min\{q^N,\|\alpha\|^{-2}q^{-N},\|\beta\|^{-2}q^{-N}\} \\
        &\leq (F(\alpha)+F(\beta))\min\{q^N,\|\alpha - \beta\|^{-2}q^{-N}\},
    \end{align*}
    and so in particular,
    \[\int_\T F(\beta)F(\alpha+\beta)\d\beta \ll \min\{q^N,\|\alpha\|^{-2}q^{-N}\}.\]
    
    Now using the definition of $H_d(\alpha)$ and applying this bound, we get
    \begin{align*}
        \int_\T H_{d_1}(\alpha)H_{d_2}(\alpha)\d\alpha &= \dfrac{1}{|d_1d_2|^r} \sum_{\substack{|a_1| < |d_1|^r \\ (a_1,d_1^r)\;r\text{-free}}} \sum_{\substack{|a_2| < |d_2|^r \\ (a_2,d_2^r)\; r\text{-free}}}\int_\T F\bigg(\alpha - \dfrac{a_1}{d_1^r}\bigg)F\bigg(\alpha - \dfrac{a_2}{d_2^r}\bigg)\d \alpha \\
        &\ll \dfrac{1}{|d_1d_2|^r} \sum_{\substack{|a_1| < |d_1|^r \\ (a_1,d_1^r)\;r\text{-free}}} \sum_{\substack{|a_2| < |d_2|^r \\ (a_2,d_2^r)\; r\text{-free}}} \min\bigg\{q^N, \bigg\|\dfrac{a_2}{d_2^r}-\dfrac{a_1}{d_1^r}\bigg\|^{-2}q^{-N}\bigg\}.
    \end{align*}
    Note that since $|a_1|<|d_1|^r$ and $|a_2|<|d_2|^r$, we have
    \[\bigg\|\dfrac{a_2}{d_2^r}-\dfrac{a_1}{d_1^r}\bigg\| = \bigg|\dfrac{a_2}{d_2^r}-\dfrac{a_1}{d_1^r}\bigg|.\]
    Define $m = a_2d_1^r - a_1d_2^r$. In order for the second term in our minimum to dominate, we need
    \[\bigg|\dfrac{a_2}{d_2^r}-\dfrac{a_1}{d_1^r}\bigg| = \dfrac{|a_2d_1^r-a_1d_2^r|}{|d_1d_2|^r} = \dfrac{|m|}{|d_1d_2|^r} < q^{-N},\]
    or equivalently, we require $\grM_N(d_1^r,a_1) = \grM_N(d_2^r,a_2)$. We make a few observations regarding the quantity $m$. Since $d_1 \neq d_2$, and $(a_1,d_1^r)$ and $(a_2,d_2^r)$ are $r$-free, the quantity $m = m(a_1,a_2)$ will never be zero. We also note that given a fixed $a_1$ and $m$, there is at most one $a_2$ such that $m(a_1,a_2) = m$. Finally, if $|[d_1,d_2]|<q^{N/r}$, then we have no solutions to the above inequality. 
    
    By Bezout's identity, we can find a unique pair $a_1',a_2'$ such that $|a_1'|<|d_1|$ and $|a_2'|<|d_2|$, and $m(a_1',a_2') = (d_1,d_2)^r$. If $|[d_1,d_2]|\geq q^{N/r}$, then for each $f$ with $\deg(f) \leq r\deg[d_1,d_2]-N$, the pair $(fa_1',fa_2')$ satisfies the inequality. There are $\big|[d_1,d_2]\big|^r/q^N$ such $f$. Thus there are $\big|[d_1,d_2]\big|^r/q^N$ pairs $(a_1,a_2)$ in our double sum for which the term $q^N$ dominates, and we note for the other term that
    \[\bigg\|\dfrac{a_2}{d_2^r}-\dfrac{a_1}{d_1^r}\bigg\|^{-2}q^{-N} = \dfrac{\big|[d_1,d_2]\big|^{2r}}{q^N|m|^2}.\]
    
    Returning to our bound, then, we get
    \[\int_\T H_{d_1}(\alpha)H_{d_2}(\alpha)\d\alpha \ll \dfrac{1}{|d_1d_2|^r}\bigg(\big|[d_1,d_2]\big|^r+\dfrac{\big|[d_1,d_2]\big|^{2r}}{q^N}\sum_{\substack{a_1,a_2 \\ m\geq\big|[d_1,d_2]\big|^rq^{-N}}}|m|^{-2}\bigg).\]
    Finally, using that there are at most $q^N$ pairs $(a_1,a_2)$ which yield a particular $m$ along with the fact that $\zeta_q(2)$ is finite, we obtain the desired result.
\end{proof}

This gives us the desired size constraint. Let $\grL(d) = \grM(d)\setmin \grN(d)$. Then simply by definition, we have that
\[\int_{\grL(d)} H_d(\alpha)\sum_{\substack{|d'| \leq q^D \\ d' \neq d}}H_{d'}(\alpha) d\alpha \geq |\grL(d)|\dfrac{q^{2N}}{q^5|d|^{2r}}.\]
The above lemma of course yields the upper bound
\[\int_{\grL(d)} H_d(\alpha)\sum_{\substack{|d'| \leq q^D \\ d' \neq d}}H_{d'}(\alpha) d\alpha \leq \sum_{\substack{|d'| \leq q^D \\ d' \neq d}} \int_\T H_d(\alpha)H_{d'}(\alpha)d\alpha \ll q^D,\]
and so together we can deduce that $|\grL(d)| \ll q^{D-2N}|d|^{2r}$. So long as $|d|<q^D$, then, we have that $|\grN(d)| \gg |\grM(d)|$. While the condition $|d|<q^D$ is be sufficient for proving that $R(\alpha)$ is large, we use a stronger condition to ensure that our third sum in the decomposition of $g_2(\alpha)$ is sufficiently small. 

Let $\epsilon>0$, and define $\grN$ to be the union of all $\grN(d)$ for $d$ monic with $|d|\leq \epsilon q^D$. Using the upper bound $|\grM(d)| \leq |d|^r/q^N$ and that $\grN(d) \subseteq \grM(d)$, we get that
\[|\grN| \leq \sum_{|d|\leq \epsilon q^D}|d|^r/q^N \leq \epsilon^r q^{Dr-N} \leq \epsilon^r.\]
Now applying the pointwise bound on $R(\alpha)$ along with the lower bound on $|\grN|$, we can deduce that
\<\label{R(a)}
\int_\grN |R(\alpha)|\d\alpha \geq C\sum_{|d| \leq \epsilon q^D}\dfrac{|\grN(d)|}{|d|^r} \gg \epsilon q^D.
\>
We put all of this to use in the proof of the desired result. 
\begin{lemma}\label{L1-lower}
    For $k<1+1/r$, one has
    \[\int_\T |g_1(\alpha)|^k \d\alpha \gg q^{Nk/(r+1)}.\]
\end{lemma}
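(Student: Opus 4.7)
The plan is to establish first the $L^1$ lower bound $\int_\T |g_2(\alpha)|\,\d\alpha \gg q^{D}$, transfer this to $g_1$ via the Fej\'er convolution identity $g_2 = F * g_1$, and then propagate across the full range $0 < k < 1 + 1/r$ by separate arguments for $k \geq 1$ (Jensen's inequality) and $0<k < 1$ (Lyapunov interpolation against the sub-critical upper bound of Lemma \ref{sub-upper}).

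For the core $L^1$ bound I work with the decomposition already in place,
\[g_2(\alpha) = R(\alpha) - \sum_{\substack{|d|\le q^D \\ d\in\calM}}\mu(d) + E(\alpha),\quad E(\alpha) := \sum_{|f|\le q^N}\Bigl(1 - \tfrac{|f|}{q^N}\Bigr)c_*(f)\,e(f\alpha).\]
The pointwise bound $|R(\alpha)| \gg q^N/|d|^r$ on each $\grN(d)$, and the integrated form $\int_\grN |R(\alpha)|\,\d\alpha \gg \epsilon q^D$ from \eqref{R(a)}, are in hand. The constant term is $O(1)$ by \eqref{mobius}, contributing at most $O(|\grN|)$ after integration over $\grN$. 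For the tail $E$, Parseval together with Lemma \ref{c-ident}, applied degree by degree and summed over $n \le N$, gives
\[\int_\T |E(\alpha)|^2\,\d\alpha \ll q^{N+(1-r)D} = q^{2D},\]
where the equality uses the calibrating choice $(r+1)D = N$. A geometric sum shows $|\grN| \ll \epsilon^{r+1}$, so Cauchy-Schwarz yields
\[\int_\grN |E(\alpha)|\,\d\alpha \le |\grN|^{1/2}\Bigl(\int_\T |E|^2\,\d\alpha\Bigr)^{1/2} \ll \epsilon^{(r+1)/2}q^D.\]
Since $(r+1)/2>1$ for $r\ge 2$, this is $o(\epsilon q^D)$ provided $\epsilon$ is chosen small enough but fixed, and combining the three contributions yields $\int_\T |g_2|\,\d\alpha \ge \int_\grN |g_2|\,\d\alpha \gg \epsilon q^D$.

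Transfer to $g_1$ is immediate: since $g_2 = F*g_1$ with $F \ge 0$ and $\int_\T F\,\d\alpha \le 1$, Fubini applied to $|g_2(\alpha)| \le \int_\T F(\alpha-\beta)|g_1(\beta)|\,\d\beta$ yields $\|g_2\|_1 \le \|g_1\|_1$, and hence $\int_\T |g_1(\alpha)|\,\d\alpha \gg q^{N/(r+1)}$. This settles the case $k=1$. For $1 < k < 1 + 1/r$, Jensen's inequality on the probability space $(\T,\d\alpha)$ gives $\int_\T |g_1|^k\,\d\alpha \ge (\int_\T |g_1|\,\d\alpha)^k \gg q^{Nk/(r+1)}$. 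For $0 < k < 1$, fix any $k' \in (1, 1+1/r)$ and apply Lyapunov's inequality with exponents $k<1<k'$: writing $1 = (1-\theta)/k + \theta/k'$ produces $\|g_1\|_1 \le \|g_1\|_k^{1-\theta}\|g_1\|_{k'}^\theta$. Inserting $\|g_1\|_1 \gg q^{N/(r+1)}$ together with the matching sub-critical upper bound $\|g_1\|_{k'} \ll q^{N/(r+1)}$ (which follows from Lemma \ref{sub-upper} by the same argument applied to $g_1$ in place of $G$) yields $\|g_1\|_k \gg q^{N/(r+1)}$, and hence $\int_\T |g_1|^k\,\d\alpha \gg q^{Nk/(r+1)}$.

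The technical heart of the argument is the $L^1$ control of $g_2$: the choice $D = \lfloor N/(r+1)\rfloor$ is calibrated precisely so that the $L^2$ bound on $E$ is the square of the main-term scale $q^D$, leaving only a power $\epsilon^{(r-1)/2}$ of slack after Cauchy-Schwarz. Verifying that this slack is genuine, and that the implicit constants cooperate at the endpoint $r=2$ (where the gain is only $\epsilon^{1/2}$), is the delicate book-keeping.
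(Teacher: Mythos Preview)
Your argument follows the same route as the paper's: decompose $g_2$ into $R$, a bounded constant, and the tail $E$; integrate over $\grN$ using the pointwise lower bound \eqref{R(a)} on $R$ together with Cauchy--Schwarz on $E$; transfer to $g_1$ via the Fej\'er kernel inequality \eqref{g_2}; then extend from $k=1$ to general $k$ by H\"older/Jensen upward and by interpolation against the sub-critical upper bound downward. Your measure estimate $|\grN|\ll\epsilon^{r+1}$ is slightly sharper than the $\epsilon^r$ recorded in the paper and makes the slack at $r=2$ explicit, but otherwise the two proofs coincide.
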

\begin{proof}
    First suppose $k = 1$, and consider $g_2(\alpha)$. We have from our decomposition that
    \[|g_2(\alpha)| \geq |R(\alpha)| - \bigg|\sum_{|f| \leq q^N} \bigg(1-\dfrac{|f|}{q^N}\bigg)c_*(f) e(f\alpha)\bigg| + O(1).\]
    Integrating over $\grN$ and applying \eqref{R(a)} along with Cauchy-Schwarz, we find that
    \[\int_\grN |g_2(\alpha)|\d\alpha \gg \epsilon q^D - \bigg(|\grN|\sum_{k\geq D} \sum_{q^D \leq |f| \leq q^N} |c_k(f)|^2\bigg)^{1/2} + O(1).\]
    Finally, applying Lemma \ref{c-ident} and \eqref{g_2}, we get that
    \[\int_\T |g_1(\alpha)| \d \alpha \gg \int_\T |g_2(\alpha)| \d \alpha \gg \int_\grN |g_2(\alpha)| \d \alpha \gg q^D(\epsilon - \epsilon^r) \gg q^D.\]
    This proves the result for $k = 1$. For $k>1$, the result is implied directly by the $k=1$ case using H\"older's inequality. Finally, for $k<1$, we write the $L^1$-mean as an interpolation between $k$ and $1+1/(r+1)$. 
    
    We have
    \[q^{N/(r+1)} \ll \int_\T|g_1(\alpha)|\d\alpha \ll \bigg(\int_\T |g_1(\alpha)|^k\d\alpha\bigg)^{\theta}\bigg(\int_\T |g_1(\alpha)|^{1+1/(r+1)}\d\alpha\bigg)^{1-\theta},\]
    where $1 = \theta k + (1-\theta)(1+(r+1)^{-1})$. Since $1+1/(r+1)<1+1/r$, we can apply the upper bound in Theorem \ref{sub-upper} to get
    \[\bigg(\int_\T |g_1(\alpha)|^k\d\alpha\bigg)^{\theta}\gg q^{N/(r+1)}\bigg(q^{N(r+2)/(r+1)^2}\bigg)^{\theta-1}.\]
    Raising to the power $1/\theta$, we obtain the desired result.
\end{proof}

\section{The Critical Case}

In this section we briefly examine the critical case $k=1+1/r$. The function field setting gives a slight advantage over the integer setting in this case, which can be attributed to the precision of \eqref{sum-ident-Mn}. Although the sum in the integer setting can be dissected into dyadic intervals in the same way, the exponential sum bound
\[\bigg|\sum_{X \leq x \leq X+Y} e(\alpha x)\bigg| \ll \min\{Y,\|\alpha\|^{-1}\}\]
does not easily gain from this dissection. An application of Keil's original method immediately yields an upper bound which is $O(N^{2-1/r}q^{N/r})$ because of this extra savings. We can improve this to a tight upper bound by being a bit more careful. Define 
\[h_j(\alpha) = \sum_{i \leq j} T_i(\alpha) \qquad {\rm and } \qquad H_j(\alpha) = \sum_{i>j} T_i(\alpha).\]
We apply Parseval's identity just as in Lemma \ref{T_i} to obtain the bounds
\<\label{h_j-bounds}
\int_\T |h_j(\alpha)|\d\alpha \ll q^j \qquad {\rm and } \qquad \int_\T |H_j(\alpha)|^2 \d\alpha \ll q^{N+(1-r)j}.
\>
\begin{lemma}\label{crit-upper}
    One has the upper bound
    \[\int_\T |G(\alpha)|^{1+1/r} \d\alpha \ll Nq^{N/r}.\]
\end{lemma}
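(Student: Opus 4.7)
The plan is to establish the bound via a distribution-function (dyadic level-set) argument, using the $L^1$-bound on $h_j$ and the $L^2$-bound on $H_j$ from \eqref{h_j-bounds} with $j$ optimized as a function of the level. Since $G(\alpha) = \sum_{n \leq N} g(n;\alpha)$, by the triangle inequality in $L^{(r+1)/r}$ it suffices to prove a per-degree estimate
\[\int_\T |g(n;\alpha)|^{(r+1)/r}\,\d\alpha \;\ll\; nq^{n/r}\]
for each $n \leq N$, since the same argument as the $n=N$ case goes through verbatim upon replacing $N$ by $n$ in the definitions of the $T_i, h_j, H_j$ (the bounds in \eqref{h_j-bounds} hold uniformly). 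Once this is established, Minkowski's inequality yields
\[\|G\|_{(r+1)/r} \leq \sum_{n \leq N}\|g(n;\cdot)\|_{(r+1)/r} \ll \sum_{n \leq N} n^{r/(r+1)}q^{n/(r+1)} \ll N^{r/(r+1)}q^{N/(r+1)},\]
whose $(r+1)/r$-power is $N q^{N/r}$, as desired.

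To prove the per-degree bound, I write
\[\int_\T |g(n;\alpha)|^{(r+1)/r}\,\d\alpha \;\asymp\; \int_0^\infty \lambda^{1/r}\,\bigl|E_\lambda\bigr|\,\d\lambda, \qquad E_\lambda = \{\alpha \in \T:|g(n;\alpha)|>\lambda\},\]
and for each $\lambda$ decompose $g(n;\alpha) = h_j(\alpha) + H_j(\alpha)$ for a $j$ to be chosen shortly. Then $E_\lambda \subseteq \{|h_j|>\lambda/2\}\cup\{|H_j|>\lambda/2\}$, and applying Markov's inequality to $h_j$ and to $|H_j|^2$ gives
\[\bigl|E_\lambda\bigr| \;\ll\; \frac{q^j}{\lambda} + \frac{q^{n+(1-r)j}}{\lambda^2}.\]
The key step is to choose $j$ to balance these two terms: the choice $q^{rj}\asymp q^n/\lambda$ equalizes the contributions and produces
\[\bigl|E_\lambda\bigr| \;\ll\; \min\!\bigl(1,\;q^{n/r}\lambda^{-(r+1)/r}\bigr).\]

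Integrating against $\lambda^{1/r}\,\d\lambda$ and splitting at the crossover $\lambda_0 = q^{n/(r+1)}$, the range $\lambda \leq \lambda_0$ contributes $\ll \lambda_0^{(r+1)/r} = q^{n/r}$, while for $\lambda_0 < \lambda \leq \|g(n;\cdot)\|_\infty \leq q^n$ we obtain
\[q^{n/r}\int_{\lambda_0}^{q^n}\frac{\d\lambda}{\lambda} \;\ll\; nq^{n/r}.\]
The logarithmic factor here is precisely the source of the extra $n$. Summing the two contributions yields the per-degree bound, and hence the lemma by the reduction above.

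The key subtlety, and the reason this improves on Keil's naive $O(N^{2-1/r}q^{N/r})$ estimate, is that $j$ is optimized separately for each level $\lambda$, rather than fixed once and for all. A fixed-$j$ interpolation between the bounds in \eqref{h_j-bounds} is insufficient; the distribution-function viewpoint, which effectively performs the $L^1$--$L^2$ splitting differently on every level set, is what yields the clean $N q^{N/r}$.
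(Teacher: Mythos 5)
Your argument is correct, but it proves the lemma by a genuinely different mechanism than the paper. The paper works with the pointwise decomposition $|g(N;\alpha)|^{1+1/r}\leq\sum_{j}|T_j(\alpha)|\,|h_j(\alpha)+H_j(\alpha)|^{1/r}$ and, for each fixed $j$, applies a three-factor H\"older inequality matching the $L^1$- and $L^2$-norms of $T_j$ against the $L^1$-norm of $h_j$ and the $L^2$-norm of $H_j$; each scale $j$ contributes $O(q^{N/r})$ and summing over the $\sim N/r$ scales produces the factor $N$. You instead run a layer-cake (Marcinkiewicz-type) argument: Markov on $h_j$ and Chebyshev on $H_j$, with the splitting index $j$ re-optimized at each level $\lambda$, yield the restricted weak-type bound $|E_\lambda|\ll\min\{1,q^{n/r}\lambda^{-(r+1)/r}\}$, and the factor $n$ emerges as $\int_{\lambda_0}^{q^n}\d\lambda/\lambda$. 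Your route has the virtue of making explicit that the critical estimate is a weak-$(1+1/r)$ phenomenon and why the logarithm is unavoidable by this method; the paper's route avoids distribution functions entirely and stays within H\"older's inequality. Your reduction from $G$ to the fixed-degree sums via Minkowski in $L^{(r+1)/r}$ is also a legitimate (and arguably cleaner) alternative to the paper's polynomial-weighting trick, and it works precisely because the per-degree bounds $n^{r/(r+1)}q^{n/(r+1)}$ sum geometrically. Two small points you should make explicit: the optimal $j$ must be rounded to an integer in $[0,\lfloor n/r\rfloor]$ (which costs only a bounded power of $q$, and lands in range since $\lambda_0\leq\lambda\leq q^n$ forces $0\leq j^*\leq n/(r+1)$); and the bound $\int_\T|H_j|^2\d\alpha\ll q^{n+(1-r)j}$ should be justified via the triangle inequality in $L^2$ together with the geometric decay of $\|T_i\|_2$, rather than by treating the $T_i$ as orthogonal.
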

\begin{proof}
    The idea is to decompose a single factor of $|g(N;\alpha)|$, and separate the remaining factor into $h_j(\alpha)+H_j(\alpha)$ for each degree $j$. Then we can apply the same reasoning from Lemma \ref{sub-upper} to obtain the bound for $G(\alpha)$. This approach allows us to save the additional factor of $N$ which arises in the traditional method. Using the standard bound $|x+y|^k \ll |x|^k+|y|^k$, we write
    \begin{align*}
        \int_\T |g(N;\alpha)|^{1+1/r}\d\alpha &\leq \sum_{j=0}^{\lfloor N/r \rfloor} \int_\T |T_j(\alpha)||h_j(\alpha)+H_j(\alpha)|^{1/r}\d\alpha \\
        &\ll_r \sum_{j=0}^{\lfloor N/r\rfloor} \bigg(\int_\T |T_j(\alpha)||h_j(\alpha)|^{1/r}\d\alpha + \int_\T |T_j(\alpha)||H_j(\alpha)|^{1/r}\d\alpha\bigg).
    \end{align*}
    
    To utilize the $L^1$ control of $h_j(\alpha)$ and the $L^2$ control of $H_j(\alpha)$, we apply H\"older's inequality to the above decomposition to get
    \begin{align*}
        \int_\T |g(N;\alpha)|^{1+1/r}\d\alpha &\ll \sum_{j=0}^{\lfloor N/r\rfloor} \bigg(\int_\T |T_j(\alpha)|^2\d\alpha\bigg)^{\frac{1}{r}}\bigg(\int_\T |T_j(\alpha)|\d\alpha\bigg)^{1-\frac{2}{r}}\bigg(\int_\T|h_j(\alpha)|\d\alpha\bigg)^{\frac{1}{r}} \\
        &\qquad + \bigg(\int_\T |T_j(\alpha)|^2d\alpha\bigg)^{\frac{1}{2r}}\bigg(\int_\T |T_j(\alpha)|d\alpha\bigg)^{1-\frac{1}{r}}\bigg(\int_\T|H_j(\alpha)|^{2}d\alpha\bigg)^{\frac{1}{2r}}.
    \end{align*}
    Applying the bounds from Lemma \ref{T_i} and \eqref{h_j-bounds}, we get that this simplifies to
    \[\int_\T |g(N;\alpha)|^{1+1/r}\d\alpha \ll \sum_{j=0}^{\lfloor N/r \rfloor} (q^{N/r}+q^{N/r}) \ll Nq^{N/r}.\]
    We then use the same polynomial weighting argument from Lemma \ref{T_i} to show that
    \[\int_\T |G(\alpha)|^{1+1/r} \d\alpha \ll \int_\T |g(N;\alpha)|^{1+1/r} \d\alpha \ll Nq^{N/r}. \qedhere\]
\end{proof}

The lower bound follows quickly from the major arc analysis in Section 5.
\begin{lemma}
    One has the lower bound
    \[\int_\T |G(\alpha)|^{1+1/r} \d\alpha \gg Nq^{N/r}.\]
\end{lemma}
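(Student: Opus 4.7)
The plan is to obtain the lower bound directly from the major-arc analysis developed in Section 5. The critical exponent $k=1+1/r$ is exactly where the major-arc contribution from each scale $|f|=q^j$ produces a contribution of order $q^{N/r}$ independent of $j$, so summing over the $\asymp N$ admissible scales of $f$ yields the required extra factor of $N$.

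First, I would appeal to the standard major-arc approximation from Section 5. For $f$ squarefree monic of degree $j$ with $jr \leq Q$, and $\ell$ with $(\ell,f^r)$ being $r$-free, this analysis gives on $\grM_N(f^r,\ell)$ a pointwise approximation
\[
G(\alpha) = S(f,\ell)\, v(\alpha - \ell/f^r) + E(\alpha),
\]
where the singular term satisfies $|S(f,\ell)| \asymp |f|^{-r}$, the kernel is $v(\beta) := \sum_{g \in \calM,\ |g| \leq q^N} e(g\beta)$, and the error $E(\alpha)$ is negligible. I would then restrict attention to the inner sub-arc $\grM^{\circ}(f^r,\ell) := \{\alpha : |\alpha - \ell/f^r| \leq q^{-N-2}\}$, which lies inside $\grM_N(f^r,\ell)$ since $jr \leq Q$. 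On this sub-arc, identity \eqref{sum-ident-Mn} gives $v(\beta) \equiv (q^{N+1}-1)/(q-1) \asymp q^N$, yielding the clean pointwise lower bound $|G(\alpha)| \gg q^N/|f|^r$.

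Next I would integrate and sum. Each sub-arc $\grM^{\circ}(f^r,\ell)$ has measure $q^{-N-1}$, and these sub-arcs are pairwise disjoint (inheriting disjointness from the major arcs, valid for $Q < N/2$). Each thus contributes
\[
\gg q^{-N-1} \cdot \bigl(q^N/|f|^r\bigr)^{1+1/r} = q^{N/r - 1}/|f|^{r+1}
\]
to $\int_\T |G|^{1+1/r}\,\d\alpha$. Summing over the $\Phi_r(f) \asymp |f|^r$ admissible values of $\ell$ gives $\gg q^{N/r - 1}/|f|$; summing over the $\asymp q^j$ squarefree monic polynomials $f$ of fixed degree $j$ gives $\gg q^{N/r - 1}$; and finally summing over $1 \leq j \leq \lfloor Q/r \rfloor$ and using $Q = \delta N$ yields $\gg (Q/r) \cdot q^{N/r - 1} \asymp N q^{N/r}$, as required.

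The main obstacle is establishing the pointwise major-arc approximation of the first step uniformly across all scales $|f| \leq q^{Q/r}$, with an error $|E(\alpha)| \ll q^N/|f|^r$ that does not wash out the lower bound $|G(\alpha)| \gg q^N/|f|^r$. This follows from Möbius inversion $a_r(g) = \sum_{d^r \mid g}\mu(d)$ combined with the sharp identity \eqref{sum-ident-Mn}---essentially the same computation underpinning the major-arc contribution developed in Section 5---and the absence of archimedean complications in the function field setting keeps the error bound especially clean. Once this approximation is in hand, the integration and summation steps reduce to an elementary geometric-series computation.
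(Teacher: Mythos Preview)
Your proposal is correct and follows essentially the same approach as the paper: both obtain the lower bound from the major-arc contribution, using that for $k=1+1/r$ each degree scale $j\le Q/r$ contributes $\asymp q^{N/r}$, so the $\asymp N$ scales produce the factor $N$. The paper packages this by invoking Lemma~\ref{major-arcs} together with $\grS_r(1+1/r;Q)\asymp N$ from Lemma~\ref{sing-ser-conv}, while you unwind the same computation more explicitly by restricting to the innermost sub-arcs $|\beta|\le q^{-N-2}$; note that for the error $E(\alpha)\ll q^{N/r}|f|^{r-1}$ from Lemma~\ref{major-arcs} to be genuinely smaller than the main term $q^N/|f|^r$ you need $\delta<(r-1)/(2r-1)$, which is implicit in your final paragraph and compatible with the standing hypothesis $\delta<1/2$.
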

\begin{proof}
    We have that
    \[\int_\T |G(\alpha)|^{1+1/r}\d\alpha \geq \int_{\grM_N(Q)} |G(\alpha)|^{1+1/r} \d\alpha \gg \grS_r(1+1/r;Q)q^{N/r} + O(q^{N/r}),\]
    We have that $\grS_r(k;Q) \gg N$ by Lemma \ref{sing-ser-conv}, and so we have the desired result.
\end{proof}

\section{An Asymptotic Formula}

While appearing fleetingly in the previous sections, it is in proving the asymptotic formula of Theorem \ref{asymp-formula} that the mechanisms of the Hardy-Littlewood circle method come into focus. Recall the major arcs $\grM_N(f^r, \ell)$, where $0\leq |\ell|<|f|^r \leq q^Q$ with $f$ squarefree and $(\ell,f^r)$ $r$-free, and recall that we take $Q<N/2$ to ensure that the major arcs are disjoint. We define
\[S(f^r,\ell) = \sum_{d \in \calM} \dfrac{\mu(d)}{|d|^r} \sum_{|u| < |f|^r}e(ud^r\ell/f^r) \quad \text{and}\quad v(\beta) = \sum_{\substack{g \in \calM \\ |g| \leq q^N}} e(g\beta).\]
It is not hard to show that
\[G(\ell/f^r+\beta) \sim |f|^{-r} S(f^r,\ell)v(\beta),\]
for $\beta$ sufficiently small. Considering the entirety of the major arcs would then yield
\[\int_{\grM_N(Q)} |G(\alpha)|^k \d\alpha \sim \sum_{\substack{f \in \calM \\ |f| \leq q^{Q/r}}}\mu^2(f)\sum_{\substack{|\ell|<|f|^r \\ (\ell,f^r) \; r\text{-free}}} \dfrac{|S(f^r,\ell)|^k}{|f|^{rk}} \int_{\{|\beta| < q^{Q-N}|f|^{-r}\}} |v(\beta)|^k \d\alpha.\]

Obtaining appropriate completions of the singular integral and the singular series then gives us an asymptotic for the major arcs. We tackle the analysis of the singular series and integral first for convenience, starting with a straightforward evaluation of $S(f^r,\ell)$.
\begin{lemma}\label{S-ident}
    One has that $S(f^r,\ell) \asymp 1$, and moreover, the identity
    \[S(f^r,\ell) = \dfrac{\mu(f)|f|^r}{\zeta_q(r)\Phi_r(f)}\]
    holds.
\end{lemma}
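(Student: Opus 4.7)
The plan is to compute $S(f^r, \ell)$ exactly; the bound $\asymp 1$ will then read off from the formula. There are three steps: evaluate the inner sum in $u$, determine which $d \in \calM$ contribute, and recognize what remains as a standard Euler product.

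First, I would apply the orthogonality identity \eqref{sum-ident-non-monic} to the inner $u$-sum: it equals $|f|^r$ exactly when $\ord\{d^r \ell/f^r\} < -r\deg f$, and vanishes otherwise. To decide which $d$ satisfy this, write $g = (d, f)$, so $d = g d'$, $f = g f'$ with $(d', f') = 1$ and, because $f$ is squarefree, also $(g, f') = 1$. Then $d^r\ell/f^r = (d')^r\ell/(f')^r$, and reducing the numerator modulo $(f')^r$ and examining the size of the remainder shows the condition collapses to $(f')^r \mid \ell$. Combined with the $r$-freeness of $(\ell, f^r)$ --- and the fact that each prime of $f'$ appears to power exactly $r$ in $f^r$ since $(g,f')=1$ --- this forces $f' = 1$, i.e.\ $f \mid d$. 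This is the main obstacle, since it is where the two side hypotheses on $f$ and $\ell$ must be used in tandem; once past it, the rest is bookkeeping.

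Writing $d = fm$ with $m \in \calM$, and using $\mu(fm) = \mu(f)\mu(m)$ when $(m,f)=1$ (and vanishing otherwise, by squarefreeness of $f$), the sum reduces to
\[S(f^r, \ell) = \mu(f)\sum_{\substack{m\in\calM \\ (m,f)=1}}\frac{\mu(m)}{|m|^r} = \mu(f)\prod_{\pi \nmid f}\left(1 - \frac{1}{|\pi|^r}\right) = \frac{\mu(f)}{\zeta_q(r)}\prod_{\pi \mid f}\left(1 - \frac{1}{|\pi|^r}\right)^{-1},\]
using $\zeta_q(r)^{-1} = \prod_\pi (1 - |\pi|^{-r})$ from the Euler factorization. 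Matching the last product against the definition \eqref{Phi_r} of $\Phi_r(f) = |f|^r\prod_{\pi|f}(1 - |\pi|^{-r})$ gives the stated identity. The bound $S(f^r, \ell) \asymp 1$ then drops out: $|\mu(f)| = 1$ by squarefreeness, $\zeta_q(r)$ is a positive constant depending only on $q$ and $r$, and $|f|^r/\Phi_r(f) = \prod_{\pi|f}(1 - |\pi|^{-r})^{-1}$ lies in $[1, \zeta_q(r)]$ since the full product over all $\pi$ converges to $\zeta_q(r)$.
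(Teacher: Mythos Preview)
Your proof is correct and follows essentially the same route as the paper: evaluate the inner sum via \eqref{sum-ident-non-monic}, deduce $f\mid d$ from the $r$-freeness of $(\ell,f^r)$ and the squarefreeness of $f$, and then identify the remaining coprime M\"obius sum as the reciprocal of an $L$-function at $r$. Your gcd decomposition $d=gd'$, $f=gf'$ makes the divisibility step more explicit than the paper's one-line claim, and you additionally spell out the bound $S(f^r,\ell)\asymp 1$ via $|f|^r/\Phi_r(f)\in[1,\zeta_q(r)]$, which the paper leaves implicit.
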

\begin{proof}
    We consider first the sum over $u$, noting that by identity \eqref{sum-ident-non-monic} it is equal to zero unless $\|d^r\ell/f^r\|<|f|^r$, in which case $f^r|(d^r\ell)$. Since we have $(\ell,f^r)$ $r$-free and $f$ is squarefree, we must then have that $f|d$. Thus
    \[S(f^r,\ell) = |f|^r\sum_{\substack{d \in \calM \\ f|d}} \dfrac{\mu(d)}{|d|^r} = \mu(f)\sum_{\substack{d \in \calM \\ (d,f) = 1}} \dfrac{\mu(d)}{|d|^r},\]
    since $\mu(d)$ is supported on squarefrees. The remaining sum is standard, and when $f\neq 1$ is equal to $L(r,\chi_0)^{-1}$, where $\chi_0$ is the trivial character modulo $f$. When $f = 1$, it is equal to $1/\zeta_q(r)$. Finally, observing the identity
    \[L(r,\chi_0) = \zeta_q(r)\prod_{\pi|f} (1-|\pi|^{-r}) = \zeta_q(r)\Phi_r(f)|f|^{-r},\]
    we have the desired result.
\end{proof}

Now define the truncated singular series
\[\grS_r(k;Q) = \sum_{\substack{f \in \calM \\ |f| \leq q^{Q/r}}} \mu^2(f)\Phi_r(f)^{1-k}\]
and the completed singular series
\[\grS_r(k) = \sum_{f \in \calM} \mu^2(f)\Phi_r(f)^{1-k} = \prod_{\pi \in \calM} \bigg(1+(|\pi|^r - 1)^{1-k}\bigg).\]
\begin{lemma}\label{sing-ser-conv}
    The singular series converges absolutely for $k>1+1/r$, and moreover, one has
    \[\grS_r(k) - \grS_r(k;Q) \asymp q^{-Q(k-1-1/r)}.\]
    If $k = 1+1/r$, then $\grS_r(k;Q) \asymp N$, and if $k<1+1/r$, then $\grS_r(k;Q) \asymp q^{Q(1+1/r-k)}$.
\end{lemma}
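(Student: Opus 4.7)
The plan is to reduce $\grS_r(k;Q)$ to a geometric-type sum and evaluate it in each of the three ranges of $k$. Two elementary ingredients drive the argument: the bound $|f|^r \ll \Phi_r(f) < |f|^r$ recorded after \eqref{Phi_r}, which gives $\Phi_r(f)^{1-k} \asymp |f|^{r(1-k)}$ with constants depending only on $k$ and $r$, and the standard count $\#\{f \in \calM_n : \mu^2(f) = 1\} = q^n - q^{n-1}$ for $n \geq 2$ (and $\asymp q^n$ for all $n \geq 0$). The latter follows at once over $\F_q[t]$ from the unique decomposition of each monic of degree $n$ as a product of a square and a squarefree polynomial, which yields the relation $q^n = \sum_{2a+b=n} q^a \cdot \#\{g \in \calM_b : \mu^2(g) = 1\}$ and hence the claimed formula by induction.

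Grouping by degree, these two facts combine to yield, for any truncation parameter $M$,
\[
\sum_{\substack{f \in \calM \\ |f| \leq q^M}} \mu^2(f)\Phi_r(f)^{1-k} \;\asymp\; \sum_{n=0}^{M} q^{n(r+1-rk)},
\]
with the analogous equivalence holding for tail sums over $|f| > q^M$. Setting $M = \lfloor Q/r \rfloor$ reduces the three cases of the lemma to direct evaluation of a geometric series with common ratio $q^{r+1-rk}$. When $k > 1+1/r$ this ratio is strictly less than one, so $\grS_r(k)$ converges and the tail $\grS_r(k) - \grS_r(k;Q)$ is dominated by its initial term, giving $\asymp q^{(Q/r)(r+1-rk)} = q^{-Q(k-1-1/r)}$. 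When $k = 1+1/r$ the ratio equals one, every degree $n \leq Q/r$ contributes $\asymp 1$, and the sum totals $\asymp Q/r \asymp N$ since $Q = \delta N$. When $k < 1+1/r$ the ratio exceeds one, the truncated sum is controlled by its final term, and we obtain $\grS_r(k;Q) \asymp q^{(Q/r)(r+1-rk)} = q^{Q(1+1/r-k)}$.

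I expect no step to pose a genuine obstacle; the only point worth flagging is that the lower bound in each $\asymp$ estimate requires a matching lower bound on the squarefree count, provided by the explicit formula $q^n - q^{n-1}$ rather than by the Euler product estimate $\Phi_r(f) \asymp |f|^r$ on its own. The remainder is bookkeeping with geometric series.
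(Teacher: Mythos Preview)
Your proposal is correct and is exactly the argument the paper has in mind: the paper's own proof is the single line ``This follows trivially from the bounds on $\Phi_r(f)$,'' and your write-up simply unpacks that line by combining $\Phi_r(f)\asymp |f|^r$ with the squarefree count $\#\{f\in\calM_n:\mu^2(f)=1\}\asymp q^n$ and summing the resulting geometric series. The only extra ingredient you make explicit, the lower bound on the number of squarefrees in each degree, is indeed needed for the $\gg$ direction of each $\asymp$ and is standard enough that the paper does not mention it.
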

\begin{proof}
    This follows trivially from the bounds on $\Phi_r(f)$.
\end{proof}
This concludes the necessary analysis of the singular series. We now define the truncated singular integral
\[J_r(k,f;Q) = \int_{\{|\beta| < q^{Q-N}|f|^{-r}\}} |v(\beta)|^k \d\beta\]
and the completed singular integral
\[J_r(k) = \int_{\T} |v(\beta)|^k \d\beta.\]
\begin{lemma}\label{sing-int-conv}
    For $k>1$, one has
    \[J_r(k) - J_r(k,f;Q) \ll q^{(N-Q)(k-1)}|f|^{r(k-1)} .\]
\end{lemma}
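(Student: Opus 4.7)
The plan is to bound the tail integral pointwise using \eqref{sum-bound-monic} — specifically its weaker face $|v(\beta)| \ll \|\beta\|^{-1}$, which is always valid and avoids any case split — and then to reduce the resulting integral to a geometric series by exploiting the fact that $|\cdot|$ on $\F_q((1/t))$ takes values only in $q^{\Z}$.

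First I would identify the region of integration as $\{\beta \in \T : q^{Q-N}|f|^{-r} \leq |\beta| < 1\}$, the complement in $\T$ of the major-arc ball defining $J_r(k,f;Q)$. For $\beta \in \T$ one has $\|\beta\| = |\beta|$, so applying the pointwise bound gives
\[J_r(k) - J_r(k,f;Q) \ll \int_{q^{Q-N}|f|^{-r} \leq |\beta| < 1} |\beta|^{-k} \d\beta.\]

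Next I would decompose this tail into the $q$-adic shells $\{|\beta| = q^{-j}\}$ for integers $j$ with $1 \leq j \leq N - Q + r\deg(f)$. Each shell has Haar measure $(1 - q^{-1})q^{-j}$ and carries the value $|\beta|^{-k} = q^{jk}$, so the $j$th shell contributes $O(q^{j(k-1)})$. Since $k > 1$, the resulting geometric sum in $j$ is dominated by its top term, producing the bound $O(q^{(N-Q+r\deg(f))(k-1)}) = O(q^{(N-Q)(k-1)}|f|^{r(k-1)})$ claimed in the lemma.

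The argument is essentially a one-line geometric-series estimate, and I do not foresee any substantial obstacle beyond identifying the correct tail region and recognizing that the weaker branch of \eqref{sum-bound-monic} suffices throughout; using the sharper branch $|v(\beta)| \ll q^N$ on the subregion $|\beta| < q^{-N}$ would merely replace a subset of the shells by terms of smaller order and would not improve the final bound.
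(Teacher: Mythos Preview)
Your proposal is correct and follows essentially the same route as the paper's proof: bound $|v(\beta)|\ll|\beta|^{-1}$ on the tail region $\{\beta\in\T:|\beta|\geq q^{Q-N}|f|^{-r}\}$, decompose into $q$-adic shells, and sum the resulting geometric series. The only cosmetic difference is that the paper records the bound as $v(\beta)\ll\min\{q^N,|\beta|^{-1}\}$ before discarding the first branch, which you correctly note is irrelevant here.
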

\begin{proof}
    We have the standard bound
    \[v(\beta) \ll \min\{q^N,|\beta|^{-1}\}.\]
    Thus 
    \[J_r(k)-J_r(k,f;Q) \ll \int_{\{|\beta| \geq q^{Q-N}|f|^{-r}\}}|\beta|^{-k} \d\beta.\]
    Breaking the region of integration into subsets of size $O(q^{-n})$ where $|\beta|^{-1} = q^n$, we have that 
    \[J_r(k) - J_r(k,f;Q) \ll \sum_{n = 0}^{N-Q+r\deg(f)} q^{n(k-1)},\]
    which yields the desired result.
\end{proof}

We now obtain a somewhat uninformative expression for the singular integral.
\begin{lemma}\label{sing-int}
    Suppose $k>1$. One has
    \[J_r(k) = B(k)q^{N(k-1)}+ o(q^{N(k-1)}),\]
    where
    \[B(k) = \bigg(\dfrac{q^{k-1}}{q^{k-1}-1}\bigg)\bigg(\dfrac{1}{q}\sum_{a \in \F_q} \bigg|e_q(a) + \dfrac{1}{q-1}\bigg|^k-\dfrac{1}{(q-1)^k}\bigg).\]
\end{lemma}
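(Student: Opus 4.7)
The plan is to exploit the precise identity \eqref{sum-ident-Mn} to evaluate the integrand $|v(\beta)|^k$ in closed form on a natural partition of $\T$, after which the integral reduces to an explicit finite sum over $\F_q$. Writing $u_n(\beta) = \sum_{g \in \calM_n} e(g\beta)$, identity \eqref{sum-ident-Mn} gives $u_n(\beta) = e_q(\beta_{-n-1})q^n$ when $\ord(\beta) < -n$, and $u_n(\beta) = 0$ otherwise. In particular $v(\beta) = \sum_{n=0}^{N} u_n(\beta)$ depends only on the coefficients $\beta_{-1},\ldots,\beta_{-(N+1)}$, and its pointwise value is controlled entirely by the position of the leading nonzero coefficient of $\beta$.

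To exploit this, for $1 \leq m \leq N+1$ set $A_m = \{\beta \in \T : \ord(\beta) = -m\}$, of Haar measure $(q-1)q^{-m}$, and set $A_\infty = \{\beta \in \T : \beta_{-1} = \cdots = \beta_{-(N+1)} = 0\}$, of measure $q^{-(N+1)}$. A direct unpacking of the formula for $u_n$ yields
\[
v(\beta) = \frac{q^{m-1}-1}{q-1} + e_q(\beta_{-m})q^{m-1} \quad \text{for } \beta \in A_m,\ 1 \leq m \leq N+1,
\]
and $v(\beta) = (q^{N+1}-1)/(q-1)$ on $A_\infty$. Since on each $A_m$ the integrand depends only on $\beta_{-m}$ (and is constant on $A_\infty$), direct integration gives
\[
J_r(k) = \sum_{m=1}^{N+1} q^{-m}\sum_{a \in \F_q^\times}\bigg|\frac{q^{m-1}-1}{q-1} + e_q(a)q^{m-1}\bigg|^k + q^{-(N+1)}\bigg(\frac{q^{N+1}-1}{q-1}\bigg)^k.
\]

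To extract the main term, introduce $X_a = e_q(a) + 1/(q-1)$ and rewrite the inner summand as $q^{m-1}X_a - 1/(q-1)$. A first-order expansion then yields $|q^{m-1}X_a - 1/(q-1)|^k = q^{(m-1)k}|X_a|^k + O(q^{(m-1)(k-1)})$ uniformly in $a$, and the error terms summed against the weight $q^{-m}$ contribute $o(q^{N(k-1)})$ for every $k > 1$. The leading geometric sum in $m$ evaluates to $\frac{q^{k-2}}{q^{k-1}-1}q^{N(k-1)}\sum_{a\in\F_q^\times}|X_a|^k$, while the $A_\infty$ term contributes $\frac{q^{k-1}}{(q-1)^k}q^{N(k-1)}(1 + O(q^{-N}))$. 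Adding these and separating off the $a = 0$ contribution $|X_0|^k = q^k/(q-1)^k$ that is built into the compact form of $B(k)$, one verifies directly that the combined main term equals $B(k)q^{N(k-1)}$. The main obstacle is really just the algebraic identification between the explicit sum above and the compact expression for $B(k)$; the error estimates themselves are elementary.
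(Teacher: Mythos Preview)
Your proof is correct and follows essentially the same route as the paper: both partition $\T$ according to $\ord(\beta)$ and the leading nonzero coefficient of $\beta$, use \eqref{sum-ident-Mn} to write $v(\beta)$ explicitly as $q^{m-1}\bigl(e_q(a)+\tfrac{1}{q-1}\bigr)+O(1)$ on each piece, and then sum the resulting geometric series. The only difference is cosmetic organization of the algebra---the paper packages the sum over $a\in\F_q$ first via $A(k)=\tfrac{1}{q}\sum_{a}|X_a|^k$ and then sums the geometric series, whereas you sum over $m$ first and reconcile with $B(k)$ at the end---but the underlying computation is identical.
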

\begin{proof}
    Define $H(\alpha) = \min\{N,-\ord\{\alpha\} - 1\}$. Observe that for $\alpha \in \T$, the identity
    \[\sum_{\substack{g \in \calM \\ |g| \leq q^N}} e(g\alpha) = \sum_{n \leq N} \sum_{g \in \calM_n} e(g\alpha) = \sum_{n = 0}^{H(\alpha)} e(t^n\alpha)q^n\]
    is dependent only on $\ord(\alpha)$ and the leading coefficient of $\alpha$. Decompose the torus into disjoint regions $S(n,a)$ defined by
    \[S(n,a) = \{\beta \in \T: \ord(\beta) = -n-1, \text{ and } \beta_{-n-1} = a\},\]
    and define $T(N) = \{\beta \in \T:\ord(\beta)<-N-1\}$. Then we have that when $\beta \in S(n,a)$,
    \[\sum_{\substack{g \in \calM \\ |g| \leq q^N}} e(g\beta) = e_q(a)q^n + \sum_{0 \leq i < n} q^i = q^n\bigg(e_q(a) + \dfrac{1}{q-1} + O(q^{-n})\bigg),\]
    and for $\beta \in T(N)$ we similarly have
    \[\sum_{\substack{g \in \calM \\ |g| \leq q^N}} e(g\beta) = q^N\bigg(\dfrac{q}{q-1}+O(q^{-N})\bigg).\]

    We can write the torus as the disjoint union of all $S(n,a)$ with $a \in \F_q\setmin\{0\}$ and $0\leq n \leq N$ along with the set $T(N)$. Thus we have that
    \begin{align*}
        J&_r(k) = \sum_{0\leq n\leq N} \sum_{a \in \F_q \setmin\{0\}} \int_{S(n,a)} \bigg|\sum_{\substack{g \in \calM \\ |g| \leq q^N}} e(g\beta)\bigg|^k \d\beta + \int_{T(N)} \bigg|\sum_{\substack{g \in \calM \\ |g| \leq q^N}} e(g\beta)\bigg|^k \d\beta \\
        &= \sum_{0\leq n\leq N} \sum_{a \in \F_q \setmin\{0\}} \int_{S(n,a)}q^{nk}\bigg|e_q(a) + \dfrac{1}{q-1} + O(q^{-n})\bigg|^k \d\beta + \int_{T(N)} q^{Nk}\bigg|\dfrac{q}{q-1}+O(q^{-N})\bigg|^k\d\beta.
    \end{align*}
    Observe that each set $S(n,a)$ is of size $|S(n,a)| = q^{-n-1}$, and the set $T(N)$ is of size $|T(N)| = q^{-N-1}$. Then
    \begin{align*}
        J_r(k) &= \sum_{0\leq n\leq N} q^{n(k-1)} \bigg(\dfrac{1}{q}\sum_{a \in \F_q \setmin\{0\}} \bigg|e_q(a) + \dfrac{1}{q-1} + O(q^{-n})\bigg|^k\bigg) + q^{N(k-1)}\bigg(\dfrac{1}{q}\bigg|\dfrac{q}{q-1}+O(q^{-N})\bigg|^k\bigg) \\
        &=  \sum_{0\leq n\leq N} q^{n(k-1)} \bigg(\dfrac{1}{q}\sum_{a \in \F_q \setmin\{0\}} \bigg|e_q(a) + \dfrac{1}{q-1}\bigg|^k\bigg) + q^{N(k-1)}\bigg(\dfrac{1}{q}\bigg|\dfrac{q}{q-1}\bigg|^k\bigg) + o(q^{N(k-1)}).
    \end{align*}
    We define
    \[A(k) = \dfrac{1}{q}\sum_{a \in \F_q} \bigg|e_q(a) + \dfrac{1}{q-1}\bigg|^k,\]
    and so reorganizing the terms we obtain
    \[I_r(k) = A(k)q^{N(k-1)} + \bigg(A(k) - \dfrac{q^{k-1}}{(q-1)^k}\bigg)\sum_{0 \leq n < N} q^{n(k-1)} + o(q^{N(k-1)}).\]
    Finally, collecting the terms of the geometric sum and simplifying, we obtain the result.
\end{proof}

We are now prepared to handle the major arcs.

\begin{lemma}\label{major-arcs}
    Let $\alpha = \ell/f^r + \beta$, where $|\beta| < q^{Q-N}|f|^{-r}$, and let $G^*(\alpha) = |f|^{-r}S(f^r,\ell)v(\beta)$. Then one has
    \[G(\alpha) - G^*(\alpha) \ll \max\{q^{N/r}|f|^{r-1},q^N|\beta|^{1-1/r}|f|^{r-1}\},\]
    and, as a consequence,
    \[\int_{\grM_N(Q)} |G(\alpha)|^k -  |G^*(\alpha)|^k \d\alpha \ll \begin{cases}
        q^{N(k-2+1/r)} \quad &\text{if } k>3 \\
        Nq^{N(k-2+1/r)} \quad &\text{if } k=3 \\
        Nq^{N(k-2+1/r) + Q(3-k)} \quad &\text{if } k < 3.
    \end{cases}\]
\end{lemma}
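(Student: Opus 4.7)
The plan is to first establish the pointwise approximation $G(\alpha) \approx G^*(\alpha)$ on each major arc, and then derive the integral bound by combining this with the elementary inequality $\bigl||G|^k - |G^*|^k\bigr| \ll |G-G^*|\,(|G|+|G^*|)^{k-1}$ valid for $k \geq 1$.

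For the pointwise bound, I would expand $a_r(g) = \sum_{d \in \calM,\, d^r \mid g} \mu(d)$ and substitute $g = d^r h$ to obtain
\[
G(\alpha) = \sum_{d \in \calM} \mu(d) \sum_{\substack{h \in \calM \\ |h|\leq q^{N-r\deg d}}} e(d^r h \alpha).
\]
Decomposing $\alpha = \ell/f^r + \beta$ and using $e(d^r h \alpha) = e(d^r h\ell/f^r)\,e(d^r h \beta)$, I would group the $h$-sum by residue class $u$ modulo $f^r$, on which the first exponential factor is constant. Replacing the restricted $h$-sum in each class by $|f|^{-r}$ times the unrestricted one, reassembling the $d$-sum, and recognizing the inner sum via Lemma~\ref{S-ident} recovers $G^*(\alpha)$ as the main term. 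The error splits into two contributions: a tail in $d$ (handled by trivial bounds on $E_d(\alpha)$), which yields the first piece $q^{N/r}|f|^{r-1}$ of the maximum, and a shift discrepancy $e(d^r h \beta) - \text{(class-average)}$ within each residue class, which produces the second piece $q^N|\beta|^{1-1/r}|f|^{r-1}$ after an interpolation between the trivial shift estimate and a sieve-based estimate exploiting the $r$-free constraint.

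For the integral consequence, I would combine the pointwise bound with $|G^*(\alpha)| \ll \Phi_r(f)^{-1}|v(\beta)|$ (via Lemma~\ref{S-ident}) and the standard estimate $|v(\beta)| \ll \min\{q^N, \|\beta\|^{-1}\}$. On each arc $\grM_N(f^r,\ell)$ I would split the $\beta$-integral at $|\beta|=q^{-N}$: on the inner region the first piece of the pointwise maximum dominates and $|v(\beta)|\asymp q^N$, while on the outer region the second piece dominates and $|v(\beta)|\ll|\beta|^{-1}$. In each subregion the integrand reduces to a power of $|\beta|$, and the resulting $\beta$-integral, combined with the factor $\Phi_r(f)$ from summing over $\ell$, reduces the bound to a sum of the form $\sum_{|f|^r \leq q^Q}|f|^{r(3-k)-1}$. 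This sum converges for $k>3$; is of order $N$ when $k=3$ (producing the logarithmic factor); and for $k<3$ is dominated by its upper limit, giving $q^{Q(3-k)}$ together with the extra factor of $N$ that carries over from the boundary behavior of the $\beta$-integral.

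The main obstacle will be obtaining the sharp pointwise bound, in particular the exponent $1-1/r$ on $|\beta|$. This exponent is reminiscent of an interpolation between the trivial $L^\infty$ shift estimate (contributing $|\beta|^1$) and a Fourier/sieve estimate exploiting the Ramanujan-type structure of $S(f^r,\ell)$ (contributing $|\beta|^0$ but with a smaller amplitude), with the optimal cutoff set by the sieve length scaled by the conductor $f$. Ensuring this estimate is uniform in both $d$ (across the sieve decomposition) and in $f$ is the principal technical hurdle; once secured, the integration reduces to a routine, if bookkeeping-heavy, splitting of the $\beta$-range and summation over squarefree denominators.
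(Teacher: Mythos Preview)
Your overall architecture matches the paper's: M\"obius-sieve expansion, residue-class sorting modulo $f^r$, then the mean-value inequality $\bigl||G|^k-|G^*|^k\bigr|\ll|G-G^*|(|G|+|G^*|)^{k-1}$ combined with $|v(\beta)|\ll\min\{q^N,\|\beta\|^{-1}\}$, a dyadic split of the $\beta$-range, and finally a sum over $f$ of the shape $\sum_{|f|^r\le q^Q}|f|^{r(3-k)-1}$. That part is fine and essentially identical to the paper.

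Where you diverge is in the pointwise bound, and here you are making life harder than necessary. You attribute the two pieces of $K=\max\{q^{N/r}|f|^{r-1},\,q^N|\beta|^{1-1/r}|f|^{r-1}\}$ to two different mechanisms---a $d$-tail and a ``shift discrepancy'' requiring interpolation---and flag the exponent $1-1/r$ on $|\beta|$ as the main technical hurdle. In the function field setting this hurdle does not exist. The paper simply truncates the sieve at
\[
L=\min\bigl\{q^{N/r},\,|\beta|^{-1/r}\bigr\}/|f|,
\]
so that both pieces of $K$ equal $q^N L^{1-r}$. The tail $|d|\ge L$ is disposed of trivially via $\sum_{|d|\ge L}q^N|d|^{-r}\ll q^N L^{1-r}=K$. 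For the head $|d|<L$ one has $|d f|^r|\beta|<1$, and the ultrametric then forces $e(u\beta)=1$ for every $|u|<|df|^r$: the ``shift discrepancy'' is \emph{identically zero}, not merely small. Consequently the inner $h$-sum over each residue class mod $f^r$ is \emph{exactly} $|df|^{-r}v(\beta)$, with no error, and re-extending the head to all $d$ costs another $O(K)$. No interpolation, no sieve-versus-trivial balancing---the exponent $1-1/r$ falls out of the single choice of cutoff $L$.

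So your plan is not wrong, but the anticipated difficulty is an artifact of archimedean intuition; the paper's route is shorter and you should take it.
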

\begin{proof}
    Let $L = \min\{q^{N/r}/|f|,|\beta|^{-1/r}/|f|\}$ and $K = \max\{q^{N/r}|f|^{r-1},q^N|\beta|^{1-1/r}|f|^{r-1}\}$. First observe that
    \[\sum_{\substack{d \in \calM \\ |d| \geq L}} \mu(d) \sum_{\substack{g \in \calM \\ |g| \leq q^N/|d|^r}}e(gd^r\alpha) \ll q^N \sum_{\substack{d \in \calM \\ |d| \geq L}} |d|^{-r}  \ll K.\]
    Thus we may consider the sum over $|d|<L$. Sorting the sum over $g$ by congruence modulo $f^r$, we have
    \[G(\alpha) = \sum_{\substack{d \in \calM \\ |d| < L}} \mu(d) \sum_{|u| < |f|^r}e(ud^r\alpha)\sum_{\substack{g \in \calM \\ |g| \leq q^N/|df|^r}} e(gf^rd^r\beta) + O(K).\]
    
    Since $|f^r\beta|<q^{Q-N}$ and $|d| < L$, we have that $|ud^r\beta|<q^{-1}$ for any $|u|<|f|^r$, and so we can simplify the sum over $u$ to get
    \[G(\alpha) = \sum_{\substack{d \in \calM \\ |d| < L}} \mu(d) \sum_{|u| < |f|^r}e(ud^r\ell/f^r)\sum_{\substack{g \in \calM \\ |g| \leq q^N/|df|^r}} e(gf^rd^r\beta) + O(K).\]
    Similarly, if $|u|<|f^rd^r|$, then $|u\beta| < q^{-1}$, and so we can write the sum over $g$ as an average, obtaining
    \begin{align*}
        G(\alpha) &= |f|^{-r}\bigg(\sum_{\substack{d \in \calM \\ |d| < L}} \dfrac{\mu(d)}{|d|^r} \sum_{|u| < |f|^r}e(ud^r\ell/f^r)\bigg)\bigg(\sum_{\substack{g \in \calM \\ |g| \leq q^N}} e(g\beta) \bigg)+ O(K).
    \end{align*}
    Finally, we observe that we can complete the sum over $d$ with an error term of the same order as the existing one, obtaining
    \[G(\alpha) = |f|^{-r}\bigg(\sum_{d \in \calM} \dfrac{\mu(d)}{|d|^r} \sum_{|u| < |f|^r}e(ud^r\ell/f^r)\bigg)\bigg(\sum_{\substack{g \in \calM \\ |g| \leq q^N}} e(g\beta) \bigg)+ O(K).\]
    This is the first result.

    Now we consider integration over a single major arc. Recall the definitions of $S(n,a)$ and $T(N)$ from the previous lemma, and define $S(n)$ to be the union of $S(n,a)$ for all $a \in \F_q\setmin\{0\}$. We can write
    \[\int_{S(n)} |G(\alpha)|^k -  |G^*(\alpha)|^k \d\alpha \ll |f|^{r(1-k)}\int_{S(n)} |v(\beta)^{k-1}K| \d\alpha.\]
    If $n<N$, then $K = q^N|\beta|^{1-1/r}|f|^{r-1}$, and so 
    \[\int_{S(n)} |G(\alpha)|^k -  |G^*(\alpha)|^k \d\alpha \ll q^{N+n(1/r-1)}|f|^{r(2-1/r-k)}\int_{S(n)} |v(\beta)|^{k-1} \d\alpha.\]
    Clearly in $S(n)$ we have that $v(\beta) \ll q^n$, and so 
    \[\int_{S(n)} |G(\alpha)|^k -  |G^*(\alpha)|^k \d\alpha \ll q^{N+n(k-3+1/r)}|f|^{r(2-1/r-k)}.\]
    The same result holds for $T(N)$. Summing over $N+r\deg(f)-Q \leq n < N$ and $T(N)$, we have that
    \[\int_{\grM_N(f^r,\ell)} |G(\alpha)|^k -  |G^*(\alpha)|^k \d\alpha \ll \begin{cases}
        q^{N(k-2+1/r)}|f|^{r(2-1/r-k)} \quad &\text{if } k>3-1/r \\
        Nq^N|f|^{-r} \quad &\text{if } k=3-1/r \\
        q^{N(k-2+1/r)-Q(k-3+1/r)}|f|^{-r} \quad &\text{if } k < 3-1/r.
    \end{cases}\]
    Thus over the entirety of the major arcs, we have that
    \[\int_{\grM_N(Q)} |G(\alpha)|^k -  |G^*(\alpha)|^k \d\alpha \ll \begin{cases}
        q^{N(k-2+1/r)} \quad &\text{if } k>3 \\
        Nq^{N(k-2+1/r)} \quad &\text{if } k=3 \\
        Nq^{N(k-2+1/r) + Q(3-k)} \quad &\text{if } k < 3.
    \end{cases}\]
    This is the desired result. We remark that the $N$ in the case $k<3$ is unnecessary except for $k=3-1/r$, but we write the result this way for convenience.
\end{proof}

We now take a brief moment to analyze the minor arcs. A relatively effortless pointwise bound is enough for our needs.
\begin{lemma}
    For all $\alpha \in \grm_N(Q)$, one has
    \[G(\alpha) \ll q^{N-Q(1-1/r)}.\]
\end{lemma}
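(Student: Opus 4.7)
The plan is to exploit the Möbius decomposition $a_r(f) = \sum_{d^r \mid f} \mu(d)$ to write
\[
G(\alpha) \,=\, \sum_{d \in \calM} \mu(d)\, W_d(\alpha), \qquad W_d(\alpha) \,=\, \sum_{\substack{g \in \calM \\ |g| \leq q^N/|d|^r}} e(g d^r \alpha),
\]
and then to apply \eqref{sum-bound-monic} to each $W_d$, giving $|W_d(\alpha)| \ll \min\{q^N/|d|^r,\, \|d^r\alpha\|^{-1}\}$. I would split the $d$-sum at $|d|^r = q^Q$, treating the tail with the trivial bound and the head with a minor-arc-based lower bound on $\|d^r\alpha\|$.

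For the tail $|d|^r > q^Q$, the estimate $W_d(\alpha) \ll q^N/|d|^r$ together with the geometric sum
\[
\sum_{m > Q/r} q^m \cdot q^{N-rm} \,\ll\, q^{N - Q(1-1/r)}
\]
is already of the target order (this is where $r \geq 2$ enters). For $|d|^r \leq q^Q$, only squarefree $d$ survive, and there are $O(q^{Q/r})$ of these. The aim is thus to establish the uniform bound $\|d^r\alpha\|^{-1} \leq q^{N-Q}$ over all such $d$, from which the head contributes $\ll q^{Q/r} \cdot q^{N-Q} = q^{N - Q(1-1/r)}$.

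The main obstacle is precisely this lower bound $\|d^r\alpha\| \geq q^{Q-N}$ for squarefree $d$ with $|d|^r \leq q^Q$. The difficulty is that the major arc definition restricts approximating numerators $\ell$ to those with $(\ell, d^r)$ being $r$-free, whereas the canonical choice $\ell := \lfloor d^r\alpha \rfloor$ (which satisfies $|\ell| < |d|^r$ and $|d^r\alpha - \ell| = \|d^r\alpha\|$ thanks to $|\alpha| < 1$) need not meet this restriction. My plan is to extract the obstruction by factoring $d = ce$, where $c$ is the product of the primes $\pi \mid d$ with $\pi^r \mid \ell$, and then writing $\ell = c^r m$. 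By the maximality of $c$, no prime dividing $e$ divides $m$ to order $r$ or higher, so $(m, e^r)$ is automatically $r$-free; moreover $|m| = |\ell|/|c|^r < |e|^r$, $|e|^r \leq q^Q$, and $e$ is squarefree, so when $e \neq 1$ the pair $(e, m)$ indexes a legitimate major arc. The hypothesis $\alpha \in \grm_N(Q)$ then delivers
\[
\|d^r\alpha\| \,=\, |c|^r\, |e^r\alpha - m| \,\geq\, |c|^r q^{Q-N} \,\geq\, q^{Q-N}.
\]
In the degenerate case $e = 1$, the inequality $|m| < |e|^r = 1$ forces $m = 0$ and hence $\ell = 0$, so $\|d^r\alpha\| = |d|^r |\alpha|$, and the exclusion of $\alpha$ from the major arc $\grM_N(1, 0) = \{|\alpha| < q^{Q-N}\}$ supplies the required bound. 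Combining the head and tail estimates then yields the stated pointwise bound.
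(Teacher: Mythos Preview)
Your argument is correct and follows essentially the same route as the paper's proof: M\"obius decomposition of $G(\alpha)$, the bound $W_d(\alpha) \ll \min\{q^N/|d|^r,\|d^r\alpha\|^{-1}\}$, and a split of the $d$-sum at $|d|^r = q^Q$ with the trivial estimate on the tail and the minor-arc lower bound $\|d^r\alpha\| \geq q^{Q-N}$ on the head. In fact you supply a detail the paper leaves implicit, namely the $d = ce$ reduction showing that the approximation $\ell/d^r$ can always be replaced by one of the form $m/e^r$ with $e$ squarefree and $(m,e^r)$ $r$-free, so that membership in $\grm_N(Q)$ genuinely forces $\|d^r\alpha\| \geq q^{Q-N}$.
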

\begin{proof}
    If $\alpha \in \grm_N(Q)$, then for each $|d| \leq q^{Q/r}$, we have $\|d^r\alpha\| \geq q^{Q-N}$, and so 
    \[\sum_{\substack{g \in \calM \\ |g| \leq q^N/|d|^r}} e(gd^r\alpha) \ll \min\{q^{N-Q},q^N/|d|^r\}.\]
    Thus
    \[G(\alpha) \ll \sum_{|d|\leq q^{N/r}}\min\{q^{N-Q},q^N/|d|^r\} \ll q^{N-Q(1-1/r)}. \qedhere\]
\end{proof}
Now we can simply apply the upper bound for $k=1+1/r$ to yield
\[\int_{\grm_N(Q)} |G(\alpha)|^k \d\alpha \leq \bigg(\sup_{\alpha \in \grm_N(Q)} |G(\alpha)|\bigg)^{k-1-1/r}Nq^{N/r} \ll Nq^{N(k-1)-Q(1-1/r)(k-1-1/r)}.\]

\asympt*
Here the constants may be taken to be
\[C = \bigg(\dfrac{1}{\zeta_q(r)}\bigg)^k \grS_r(k)B(k)\]
and
\[\theta = \begin{cases}
    1-1/r \quad &\text{if } k>3+1/r \\
    \frac{1}{2}(1-1/r)(k-1-1/r) - \epsilon \quad &\text{if } 2+1/r < k \leq 3+1/r \\
    \frac{(1-1/r)^2(k-1-1/r)}{(1-1/r)(k-1-1/r)+(3-k)} - \epsilon \quad &\text{if } k\leq 2+1/r.
\end{cases}\]
\begin{proof}
    First consider the major arc contribution. We have by Lemma \ref{major-arcs} that
    \[\int_{\grM_N(Q)} |G(\alpha)|^k \d\alpha = \int_{\grM_N(Q)} |G^*(\alpha)|^k \d\alpha + E_1,\]
    where
    \[E_1 \ll \begin{cases}
        q^{N(k-2+1/r)} \quad &\text{if } k>3 \\
        Nq^{N(k-2+1/r)} \quad &\text{if } k=3 \\
        q^{N(k-2+1/r) + Q(3-k)+\epsilon} \quad &\text{if } k < 3.
    \end{cases}\]
    We can evaluate the main term above to yield
    \[\sum_{\substack{f \in \calM \\ |f| \leq q^{Q/r}}}\mu^2(f)\sum_{\substack{|\ell|<|f|^r \\ (\ell,f^r) \; r\text{-free}}} \dfrac{|S(f^r,\ell)|^k}{|f|^{rk}}J_r(k,f;Q) + E_1,\]
    and applying Lemmas \ref{sing-int-conv}, and \ref{sing-int}, we can complete the singular integral to obtain
    \[\int_{\grM_N(Q)} |G(\alpha)|^k \d\alpha = \bigg(\sum_{\substack{f \in \calM \\ |f| \leq q^{Q/r}}}\mu^2(f)\sum_{\substack{|\ell|<|f|^r \\ (\ell,f^r) \; r\text{-free}}} \dfrac{|S(f^r,\ell)|^k}{|f|^{rk}}\bigg)B(k)q^{N(k-1)} + E_1 + E_2,\]
    where $E_2 \ll q^{N(k-1)-Q(k-1-1/r)}$, since $k>1+1/r$. Similarly, applying Lemmas \ref{S-ident} and \ref{sing-ser-conv}, we can complete the singular series to obtain
    \[\int_{\grM_N(Q)} |G(\alpha)|^k \d\alpha = \bigg(\dfrac{1}{\zeta_q(r)}\bigg)^k \grS_r(k)B(k)q^{N(k-1)} + E_1 + E_2.\]
    Finally, we remark that the minor arc contribution is $O(E_2)$, and so 
    \[\int_{\T} |G(\alpha)|^k \d\alpha = \bigg(\dfrac{1}{\zeta_q(r)}\bigg)^k \grS_r(k)B(k)q^{N(k-1)} + E_1 + E_2.\]
    The result follows from comparing the error terms $E_1$ and $E_2$. The term $E_1$ dominates provided $k>3+1/r$. When $2+1/r < k \leq 3+1/r$, the term $E_2$ dominates, and for the remaining case $Q$ is chosen to optimize the error term.
\end{proof}

\section*{Acknowledgements}

The author is grateful to his advisor Trevor Wooley for his encouragement and insights, and in particular for his suggestions regarding Lemma \ref{crit-upper}. He also wishes to thank Atal Bhargava and Anurag Sahay for their helpful conversation and advice.

\bibliography{refs}

\end{document}